\newcommand{\Marginpar}[1]{\marginpar{\tiny{#1}}}
\newcommand{\Note}[1]{{\par\noindent\hrulefill\par\tiny{#1}\par\noindent\hrulefill\par}}
\newcommand{\Detail}[1]{{#1}}
\renewcommand{\Marginpar}[1]{}
\renewcommand{\Note}[1]{}
\renewcommand{\Detail}[1]{}
\renewcommand*{\backref}[1]{}
\renewcommand*{\backrefalt}[4]{%
    \ifcase #1 (Not cited.)%
    \or        (Cited on page~#2.)%
    \else      (Cited on pages~#2.)%
    \fi}
\newtheorem{thm}{Theorem}
\newtheorem*{thm*}{Theorem}
\newtheorem{prop}[thm]{Proposition}
\newtheorem{lem}[thm]{Lemma}
\newtheorem*{lem*}{Lemma}
\newtheorem{cor}[thm]{Corollary}
\newtheorem*{cor*}{Corollary}
\theoremstyle{definition}
\newtheorem{defn}[thm]{Definition}
\newtheorem*{defn*}{Definition}
\newtheorem{ex}[thm]{Example}
\newtheorem{rem}[thm]{Remark}
\renewcommand{\[}{\begin{equation*}}
\renewcommand{\]}{\end{equation*}}
\newcommand\mfg{{\mathfrak g}}
\newcommand\mfu{{\mathfrak u}}
\newcommand{\bC}{\mathbb{C}}
\newcommand{\bR}{\mathbb{R}}
\newcommand{\ad}{\operatorname{ad}}
\newcommand{\del}{\partial}
\newcommand{\delbar}{\overline{\partial}}
\DeclareMathOperator\End{End}
\def \R {\mathbb R}
\def \ad {\mbox{ad}}
\renewcommand{\bar}{\overline}
\begin{document}
\parskip1mm

\title[On the invariant and anti-invariant cohomologies of hypercomplex manifolds.]{On the invariant and anti-invariant cohomologies of hypercomplex manifolds.}

\author{Mehdi Lejmi}
\address{Department of Mathematics, Bronx Community College of CUNY, Bronx, NY 10453, USA.}
\email{mehdi.lejmi@bcc.cuny.edu}
\author{Nicoletta Tardini}
\address{Dipartimento di Scienze Matematiche, Fisiche e Informatiche\\
Unit\`a di Matematica e Informatica\\
Universit\`a degli Studi di Parma\\
Parco Area delle Scienze 53/A\\
43124 Parma, Italy}
\email{nicoletta.tardini@unipr.it}

\thanks{The first author is supported by the Simons Foundation Grant \#636075. The second author is partially supported by GNSAGA of INdAM.}

\keywords{}

\subjclass[2010]{53C55 (primary); 53B35 (secondary)} 

\maketitle
\begin{abstract}
A hypercomplex structure $(I,J,K)$ on a manifold $M$ is said to be $C^\infty$-pure-and-full if the Dolbeault cohomology $H^{2,0}_{\del}(M,I)$ is the direct sum of two natural subgroups called the $\bar{J}$-invariant and the $\bar{J}$-anti-invariant subgroups.
We prove that a compact hypercomplex manifold that satisfies the quaternionic version of the $dd^c$-Lemma is $C^\infty$-pure-and-full. Moreover, we study the dimensions of the $\bar{J}$-invariant and the $\bar{J}$-anti-invariant subgroups, 
together with their analogue in the Bott-Chern cohomology. For instance, in real dimension 8, we characterize the existence of hyperk\"ahler with torsion metrics in terms of the dimension of the $\bar{J}$-invariant subgroup. 
We also study the existence of special hypercomplex structures on almost abelian solvmanifolds.
  \end{abstract}
  
  \section{introduction}
  A hypercomplex manifold $(M,I,J,K)$ is a manifold $M$ of real dimension $4n$ equipped with three complex structures $I,J,K$ satisfying the quaternionic relations. A hyperhermitian metric $g$ is a Riemannian metric on $M$ such that $I$ and $J$ are $g$-orthogonal. Now, let $\Omega$ be the $(2,0)$-form with respect to $I$ defined by $\Omega(\cdot,\cdot):=g(J\cdot,\cdot)+\sqrt{-1}g(K\cdot,\cdot)$. Then, the metric $g$ is called hyperk\"ahler if $d\Omega=0$
  and hyperk\"ahler with torsion~\cite{MR1396267} (HKT for short) if $\del\Omega=0,$ where $\del$ is the Dolbeault operator with respect to $I.$ Moreover, a hypercomplex manifold admits a unique torsion-free connection preserving $I,J,K$ called the Obata connection~\cite{MR95290}. If the holonomy of the Obata connection is in the commutator subgroup $SL(n,\mathbb{H})$ of the general quaternionic linear group $GL(n,\mathbb{H})$ then the hypercomplex manifold is said to be an $SL(n,\mathbb{H})$-manifold.
  
 On a hypercomplex manifold $(M,I,J,K)$, any $(2,0)$-form (with respect to $I$) can be expressed as the sum of the real part satisfying $J\bar{\varphi}=\varphi$, and the imaginary part satisfying $J\bar{\varphi}=-\varphi$ (here $\varphi$ is a $(2,0)$-form with respect to $I$). One can then define the following two subgroups of the Dolbeault cohomology group $H^{2,0}_{\del}(M,I)$: the \emph{$\bar J$-invariant} subgroup
 $$
 H_{\del}^{\bar J,+}(M)=\left\lbrace a\in H^{2,0}_{\del}(M,I)\,|\, \exists\varphi\in a \text{ such that } \del\varphi=0 \text{ and } J\bar{\varphi}=\varphi\right\rbrace,
 $$
 and the \emph{$\bar J$-anti-invariant subgroup}
  $$
 H_{\del}^{\bar J,-}(M)=\left\lbrace a\in H^{2,0}_{\del}(M,I)\,|\, \exists\varphi\in a \text{ such that } \del\varphi=0 \text{ and }  J\bar{\varphi}=-\varphi\right\rbrace.
 $$
 Analogue subgroups were studied in the almost-complex setting. We refer the reader for instance to~\cite{MR2574723,MR2576281,MR2817781,MR3032090,MR4244890,MR4318890}. For example, one of the questions raised in these papers is if the second de Rham cohomology group is the direct sum of those analogue subgroups. 
 
 In this paper, we study the two subgroups $H_{\del}^{\bar J,+}(M)$ and $H_{\del}^{\bar J,-}(M)$. After Preliminaries, in Section~\ref{pure-full}, we prove that if the quaternionic
version of the $dd^c$-Lemma holds on a hypercomplex manifold then {the hypercomplex structure} is $C^\infty$-pure-and-full i.e.  {$H^{2,0}_{\partial}(M,I)$}  is the direct sum of $H_{\del}^{\bar J,+}(M)$ and $H_{\del}^{\bar J,-}(M).$
\begin{thm*}(Theorem~\ref{thm:deldeljlemma-pure-full})
  {Let $(M,I,J,K)$ be a compact hypercomplex manifold that satisfies} the $\partial\partial_J$-Lemma then {the hypercomplex structure} is $C^\infty$-pure-and-full. 
 \end{thm*}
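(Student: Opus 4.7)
Throughout, let $\tilde{J}\colon \Lambda^{p,0}_I(M) \to \Lambda^{p,0}_I(M)$ denote the map $\varphi\mapsto J\bar\varphi$; on even-degree forms this satisfies $\tilde{J}^2 = \mathrm{id}$, producing the decomposition $\Lambda^{2,0}_I = \Lambda^{2,0,+}_I \oplus \Lambda^{2,0,-}_I$ into the $\pm 1$ eigenspaces that define $\bar J$-(anti-)invariance. The key compatibility that I take from the Preliminaries is that conjugation by $\tilde{J}$ interchanges $\partial$ and $\partial_J$ (up to sign) on $\Lambda^{\bullet,0}_I$, so in particular a $\partial$-closed form that is $\tilde{J}$-invariant or $\tilde{J}$-anti-invariant is automatically $\partial_J$-closed.

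\emph{$C^\infty$-fullness.} Starting from a class in $H^{2,0}_\partial(M,I)$ represented by $\varphi$, I first use the $\partial\partial_J$-Lemma to replace $\varphi$ by a cohomologous representative that is simultaneously $\partial$- and $\partial_J$-closed: since $\partial_J\varphi$ is $\partial$-closed, $\partial_J$-closed and $\partial_J$-exact, the lemma provides $\eta$ with $\partial_J\varphi = \partial\partial_J\eta$, and then $\tilde{\varphi} := \varphi + \partial\eta$ lies in $\ker\partial \cap \ker\partial_J$ and still represents the given class. Decomposing $\tilde{\varphi} = \tilde{\varphi}^+ + \tilde{\varphi}^-$ into its $\tilde{J}$-eigenparts, the relations $\partial\tilde{\varphi}^+ + \partial\tilde{\varphi}^- = 0$ and $\partial_J\tilde{\varphi}^+ + \partial_J\tilde{\varphi}^- = 0$, combined with the fact that applying $\tilde{J}$ swaps these two equations up to a sign coming from the $\pm 1$ eigenvalue, force $\partial\tilde{\varphi}^\pm = \partial_J\tilde{\varphi}^\pm = 0$ individually. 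Hence $[\tilde{\varphi}^+] \in H^{\bar J,+}_\partial(M)$ and $[\tilde{\varphi}^-] \in H^{\bar J,-}_\partial(M)$, with sum equal to $[\varphi]$.

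\emph{$C^\infty$-pureness.} Suppose $[\varphi] \in H^{\bar J,+}_\partial(M) \cap H^{\bar J,-}_\partial(M)$, with $\tilde{J}$-invariant $\partial$-closed representative $\xi^+$ and $\tilde{J}$-anti-invariant $\partial$-closed representative $\xi^-$. By the compatibility, both $\xi^\pm$ are also $\partial_J$-closed, so $\xi^+-\xi^-$ is $\partial$-closed, $\partial_J$-closed and $\partial$-exact; the $\partial\partial_J$-Lemma yields $f\in\Lambda^{0,0}(M)$ with $\xi^+-\xi^- = \partial\partial_J f$. Applying $\tilde{J}$ to this identity (which acts as complex conjugation on functions) and invoking the compatibility once more gives $\xi^+ + \xi^- = \pm \partial\partial_J \bar f$; averaging the two identities then yields $\xi^+ = \tfrac12\,\partial\partial_J(f \pm \bar f)$, which is $\partial$-exact, so $[\varphi] = [\xi^+] = 0$.

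\emph{Anticipated obstacle.} The argument hinges on the precise form of the compatibility $\tilde{J}\partial\tilde{J}^{-1}=\pm\partial_J$ on $\Lambda^{p,0}_I$ and on the careful tracking of signs when composing with $\partial_J$---in particular, verifying the identity $\tilde{J}\partial\partial_J = \pm\partial\partial_J\tilde{J}$ on $\Lambda^{0,0}(M)$ used in the pureness step, where the fact that $\tilde{J}^2 = -1$ on $\Lambda^{1,0}_I$ and $\Lambda^{3,0}_I$ (while $\tilde{J}^2 = +1$ on $\Lambda^{2,0}_I$) enters. Once these intertwining relations are in hand from the Preliminaries, the $\partial\partial_J$-Lemma plays its role solely in upgrading $\partial$-closed representatives to $(\partial,\partial_J)$-closed ones and in promoting $\partial$-exactness to $\partial\partial_J$-exactness whenever additional $\partial_J$-closedness is available.
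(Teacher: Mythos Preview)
Your proposal is correct and follows essentially the same route as the paper's proof: for fullness, both upgrade a $\partial$-closed representative to a $(\partial,\partial_J)$-closed one via the $\partial\partial_J$-Lemma and then split into $\bar J$-eigenparts; for pureness, both write the difference of the two representatives as $\partial\partial_J$ of a function and then separate according to $\bar J$-type. The only cosmetic difference is that the paper decomposes the potential as $w=u+\sqrt{-1}v$ with $u,v$ real and uses that $\partial\partial_J u\in\Omega^{\bar J,+}$ while $\sqrt{-1}\partial\partial_J v\in\Omega^{\bar J,-}$, whereas you obtain the same conclusion by applying $\tilde J$ to the identity $\xi^+-\xi^-=\partial\partial_J f$ and averaging; these are the same computation once one notes $\tilde J f=\bar f$ on functions.
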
 
  Then, we discuss the dimensions $h^{\pm}_{\bar{J}}$ of $H_{\del}^{\bar J,\pm}(M)$. We prove that, for a deformation of an $SL(2,\mathbb{H})$-structure on a compact manifold, the dimension $h^-_{\bar{J}}$ is an upper-semi-continuous function (Corollary~\ref{semi-cont}). This is similar to a result obtained in~\cite{MR3032090} on compact almost-complex manifolds. Then, we show that on a compact $SL(2,\mathbb{H})$-manifold, the existence of HKT metrics can be characterized in terms of $h^+_{\bar{J}}$ and the dimension of $H^{\bar{J},+}_{BC}(M)$ a subgroup of the second quaternionic Bott--Chern cohomology group. Indeed, we have the following:
 \begin{thm*}(Theorem~\ref{HKT-h+})
  On a compact $SL(2,\mathbb{H})$-manifold, either $\dim H^{\bar{J},+}_{BC}(M)=h^{+}_{\bar{J}}+1$ or $\dim H^{\bar{J},+}_{BC}(M)=h^{+}_{\bar{J}}.$ Moreover, the $SL(2,\mathbb{H})$-manifold is HKT if and only if $\dim H^{\bar{J},+}_{BC}(M)=h^{+}_{\bar{J}}.$
  \end{thm*}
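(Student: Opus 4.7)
The key observation is the following pointwise identity, valid on $\Lambda^{2,0}_{\bar J,+}$: since $J\varphi=-\bar\varphi$ there, we have $\del_J\varphi=-J^{-1}\overline{\del\varphi}$, so
\[
\ker\del\cap\Lambda^{2,0}_{\bar J,+}=\ker\del_J\cap\Lambda^{2,0}_{\bar J,+}.
\]
Hence the forgetful map
\[
\iota\colon H^{\bar J,+}_{BC}(M)\longrightarrow H^{\bar J,+}_{\del}(M),\qquad [\varphi]_{BC}\mapsto[\varphi]_\del,
\]
is surjective, being induced by the identity on a common numerator, and one obtains the identification
\[
\ker\iota\;\cong\;\bigl(\del\Lambda^{1,0}\cap\Lambda^{2,0}_{\bar J,+}\bigr)\big/\del\del_J C^\infty(M,\R).
\]
The entire theorem therefore reduces to showing $\dim\ker\iota\in\{0,1\}$ and that $\ker\iota=0$ exactly when HKT holds.

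To bound $\dim\ker\iota\le 1$, I would invoke the $SL(2,\mathbb{H})$-holomorphic volume form $\Theta\in\Lambda^{4,0}$ (nowhere vanishing, Obata-parallel, hence $\del\Theta=0$). On a compact $SL(2,\mathbb{H})$-manifold the quaternionic Hodge star $\star_\Theta\colon\Lambda^{p,0}\to\Lambda^{4-p,0}$ preserves $\Lambda^{2,0}$, and this enables one to produce a natural pairing between $\ker\iota$ and $\C$: concretely, integrate $\del\alpha$ against the top form built from $\bar\Theta$ after applying $\star_\Theta$ to convert the degree into $(2,4)$. A Stokes argument using $\del\Theta=0$ shows the resulting functional $\Psi\colon\ker\iota\to\C$ is well defined modulo $\del\del_J C^\infty(M,\R)$ and injective; the injectivity relies on ellipticity of the $\del\del_J$-Laplacian on $\Lambda^{2,0}_{\bar J,+}$, which through the identity above coincides with a standard fourth-order complex Laplacian on the $\bar J$-invariant subspace.

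The identification $\ker\iota=\{0\}\iff\text{HKT}$ comes from positivity. Given an HKT form $\Omega$, the pointwise relation $\Omega\wedge\Omega=f\,\Theta$ with $f>0$ translates through $\Psi$ into a strictly positive value on any potential kernel class, collapsing the kernel. Conversely, when $\iota$ is an isomorphism every $\del$-closed $\bar J$-invariant cohomology class admits a $\del\del_J$-normalised Bott--Chern lift, and a convex-cone argument applied inside the real $\bar J$-invariant $\del$-closed $(2,0)$-forms extracts a strictly positive representative, namely an HKT form.

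I expect the main obstacle to be precisely this last convex-cone step: producing a \emph{positive} representative from a purely cohomological hypothesis, the quaternionic analogue of producing a Kähler form from a Kähler class. Here one needs openness of the HKT cone inside the relevant Bott-Chern class and its nontrivial intersection with a $\del\del_J$-normalised representative, in the spirit of $\ddc$-Lemma arguments but adapted to the restricted $\bar J$-invariant setting. The dimensional coincidence $n=2$ is used essentially, both for $\star_\Theta$ preserving $\Lambda^{2,0}$ and for the identity $\Omega^2=f\Theta$ linking positivity to the pairing $\Psi$.
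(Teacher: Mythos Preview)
Your reduction to studying $\ker\iota$ is correct, and the identification
\[
\ker\iota\;\cong\;\bigl(\del\Lambda^{1,0}\cap\Omega^{\bar J,+}\bigr)\big/\del\del_J C^\infty(M,\R)
\]
is right. But from there the argument has real gaps, and the paper avoids all of them by taking a completely different, much shorter route.

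The paper does not analyse $\ker\iota$ at all. It simply quotes \cite[Theorem~9.8]{MR3698291}, which already gives
\[
0\le h^{2,0}_{BC}-h^{2,0}_{\del}\le 1,\qquad \text{with equality }0\iff\text{HKT},
\]
on any compact $SL(2,\mathbb{H})$-manifold. It then uses the decompositions established earlier in the paper---$H^{2,0}_{BC}=H^{\bar J,+}_{BC}\oplus H^{\bar J,-}_{BC}$ (Proposition~\ref{BC-decomposition}), $h^{2,0}_{\del}=h^+_{\bar J}+h^-_{\bar J}$, and the isomorphism $H^{\bar J,-}_{BC}\cong H^{\bar J,-}_{\del}$ (Lemma~\ref{maps})---to subtract off the $\bar J$-anti-invariant parts. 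The theorem drops out in two lines. In effect, the external result packages both the bound $\dim\ker\iota\le 1$ and the HKT characterisation; what remains in this paper is elementary bookkeeping with the $\pm$ splitting.

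Your proposal, by contrast, attempts to reprove the content of \cite[Theorem~9.8]{MR3698291} from scratch. Two concrete problems: first, your description of the functional $\Psi$ is incoherent---you write ``integrate $\del\alpha$'' when $\del\alpha=0$ by hypothesis, and the Stokes/ellipticity argument you allude to is never actually carried out. The pairing you need is against a hyperhermitian form $\Omega$ (giving a real-valued functional on $\ker\iota$), and making that well-defined and injective is exactly where the work in \cite{MR3698291} lies. Second, and more seriously, the step you yourself flag---producing an HKT metric from the vanishing of $\ker\iota$ by a ``convex-cone argument''---is not a routine matter. This is the quaternionic analogue of extracting a K\"ahler form from a cohomological hypothesis, and you have not supplied a mechanism for it. The paper sidesteps this entirely by importing the HKT characterisation from the cited result.

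So: your framework is not wrong, but it amounts to re-deriving a nontrivial theorem that the paper is content to cite, and the hardest step of that re-derivation is precisely the one you leave open.
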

  In Section~\ref{nilpotent}, we discuss $8$-dimensional hypercomplex nilmanifolds and we obtain the following:
  \begin{cor*}(Corollary~\ref{N-cor})
Let $N$ be an $8$-dimensional nilmanifold endowed with a left-invariant hypercomplex structure $(I,J,K)$, then we have the following,
\begin{itemize}
\item $N$ admits an HKT metric if and only if $h^{+}_{\bar{J}}=4$;
\item $N$ admits no HKT metrics if and only if $h^{+}_{\bar{J}}=2$.
\end{itemize}
\end{cor*}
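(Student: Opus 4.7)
The plan is to combine Theorem~\ref{HKT-h+} with a direct computation at the Lie-algebra level, relying on the classification of $8$-dimensional hypercomplex nilmanifolds. To begin, observe that every invariant hypercomplex structure on a nilmanifold has Obata holonomy contained in $SL(n,\mathbb{H})$ (by a result of Barberis--Dotti--Verbitsky), so in dimension $8$ we are automatically in the $SL(2,\mathbb{H})$ setting and Theorem~\ref{HKT-h+} applies, yielding $\dim H^{\bar{J},+}_{BC}(N)=h^{+}_{\bar{J}}+\varepsilon$ with $\varepsilon\in\{0,1\}$ and $\varepsilon=0$ exactly when $N$ is HKT. This reduces the corollary to identifying the possible values of $h^{+}_{\bar{J}}$ in each of the two regimes.

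Next, I would pass to invariant forms. Left-invariant hypercomplex structures on nilmanifolds have nilpotent underlying complex structures with respect to $I$; under this hypothesis the Dolbeault and quaternionic Bott--Chern cohomologies are computed by the subcomplex of invariant forms (Console--Fino, Angella--Kasuya-type results). In particular $h^{+}_{\bar{J}}$ is read off from the Chevalley--Eilenberg complex of the Lie algebra $\mathfrak n$: inside the real $6$-dimensional subspace $(\Lambda^{2,0}_I\mathfrak n^*)^{+}_{\bar{J}}$ one has to compute $\ker\partial/\operatorname{im}\partial$.

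At this point I would invoke the existing classification of $8$-dimensional nilpotent hypercomplex Lie algebras, together with the known determination of which of them admit an HKT metric (Dotti--Fino, Fino--Grantcharov). There are finitely many families; for each, the structure constants are explicit and the action of $\partial$ on invariant $\bar{J}$-invariant $(2,0)$-forms can be written down by hand. A case-by-case inspection then yields $h^{+}_{\bar{J}}=4$ in every HKT case and $h^{+}_{\bar{J}}=2$ in every non-HKT case, establishing the equivalence in both directions.

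The main obstacle is precisely this last case-by-case verification, which is mechanical but requires careful bookkeeping across the families. A more conceptual reason for the jump from $2$ to $4$, which I would try to isolate before resorting to the full classification, is that a canonical real $2$-dimensional subspace of $H^{\bar{J},+}_{\partial}$ is always present (produced by invariant $\bar{J}$-invariant $(2,0)$-forms pulled back from the abelianisation of $\mathfrak n$), while a second $2$-dimensional subspace is contributed by the HKT form $\Omega$ and its $I$-multiple precisely when $\partial\Omega=0$. If this dichotomy can be proved intrinsically from the nilpotent structure and Theorem~\ref{HKT-h+}, the classification step becomes redundant and the corollary follows uniformly.
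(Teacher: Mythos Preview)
Your plan would eventually succeed, but it differs from the paper's in two respects, one of which is a genuine detour.

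First, invoking Theorem~\ref{HKT-h+} buys you nothing here. That theorem compares $\dim H^{\bar{J},+}_{BC}$ with $h^{+}_{\bar{J}}$, but the corollary is a statement about $h^{+}_{\bar{J}}$ alone; after applying Theorem~\ref{HKT-h+} you still have to compute $h^{+}_{\bar{J}}$ from scratch in both regimes, so nothing has been ``reduced''. The paper does not use Theorem~\ref{HKT-h+} at all for this corollary.

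Second, the paper avoids any case-by-case pass over the classification. It relies instead on the uniform description of Dotti--Fino \cite{MR1997309}: every $8$-dimensional nilpotent hypercomplex Lie algebra carries an adapted $(1,0)$-coframe $\varphi^1,\dots,\varphi^4$ with $\partial\varphi^1=\partial\varphi^2=0$ and $\partial\varphi^3,\partial\varphi^4$ multiples of $\varphi^{12}$, the multipliers encoded by four real parameters $t_1,t_2,t_3,t_4$. The structure is abelian (equivalently, by \cite{MR2496748}, admits an HKT metric) exactly when all $t_i$ vanish. One then writes down the four $\bar{J}$-invariant $(2,0)$-forms $\Psi_1=\varphi^{12}$, $\Psi_2=\varphi^{34}$, $\Psi_3=\varphi^{13}+\varphi^{24}$, $\Psi_4=\varphi^{14}-\varphi^{23}$; the last two are always $\partial$-closed and non-exact, while as soon as some $t_i\neq 0$ the form $\Psi_1$ becomes $\partial$-exact and $\Psi_2$ fails to be $\partial$-closed. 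This is precisely the ``conceptual reason'' you sketch in your last paragraph, made explicit and handled uniformly rather than family by family. The step you correctly flag---passing from ``admits an HKT metric'' to ``admits a left-invariant HKT metric''---is handled by \cite{MR2101226}, exactly as in your outline.
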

In Section~\ref{almost-abelian}, we focus on hypercomplex almost abelian Lie groups. Such Lie groups were studied in a recent paper of Andrada and Barberis~\cite{Andrada:2022aa}.
{We similarly obtain that on unimodular almost abelian Lie groups a left-invariant hyperhermitian structure is HKT if and only if it is hyperk\"ahler. Moreover, since among hyperhermitian structures the $SL(n,\mathbb{H})$ condition plays a fundamental role
we give an explicit characterization on almost abelian solvmanifolds for an invariant hyperhermitian structure to be $SL(n,\mathbb{H})$.
Then, we focus on the $8$-dimensional case and we prove the following}

\begin{cor*}(Corollary~\ref{anti-inv-almost-abelian})
Let $\mfg$ be an $8$-dimensional non-abelian almost abelian unimodular Lie algebra equipped with a left-invariant $SL(2,\mathbb{H}) $-structure. Then the dimension of $\del$-closed non $\del$-exact left-invariant imaginary $(2,0)$-forms is non-zero if and only if $\tilde{f}= 0$ and $a=0$ where $\tilde{f}$ and $a$ are given by Theorem~\ref{thm:almost-abelian}. In particular, $\mfg$ is nilpotent and do not admit any HKT metric.
\end{cor*}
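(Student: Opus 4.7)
The plan is to use the explicit normal form provided by Theorem~\ref{thm:almost-abelian} for non-abelian almost abelian $8$-dimensional unimodular Lie algebras admitting an $SL(2,\mathbb{H})$-structure and then to compute the $\partial$-cohomology of left-invariant imaginary $(2,0)$-forms by direct linear algebra.

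First I would fix a basis $e_0,\ldots,e_7$ of $\mfg$ adapted to the almost abelian decomposition $\mfg=\mathbb{R}e_0\ltimes\mfh$, with $\mfh$ the codimension-one abelian ideal, together with the associated $(1,0)$-coframe $\{\zeta^1,\zeta^2,\zeta^3,\zeta^4\}$ with respect to $I$. The structure equations supplied by Theorem~\ref{thm:almost-abelian} determine the Chevalley--Eilenberg differential $d$ on left-invariant forms as an explicit linear operator whose entries are polynomial in the structure parameters, among them $\tilde f$ and $a$; projecting onto bidegree $(p{+}1,0)$ with respect to $I$ then yields $\partial$ on left-invariant $(p,0)$-forms.

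Next I would parameterise the finite-dimensional subspace $\Lambda^{2,0}_{\bar J,-}(\mfg^*)$ of left-invariant imaginary $(2,0)$-forms, i.e.\ those $\varphi$ satisfying $J\bar\varphi=-\varphi$, impose $\partial\varphi=0$ on a generic element $\varphi$, and quotient by $\partial\Lambda^{1,0}(\mfg^*)\cap\Lambda^{2,0}_{\bar J,-}(\mfg^*)$. This produces a linear system whose coefficients are polynomial expressions in the parameters of Theorem~\ref{thm:almost-abelian}, and the target statement is that the system admits a non-zero solution precisely when $\tilde f=0$ and $a=0$. I expect this to be the main technical obstacle: the parameters in Theorem~\ref{thm:almost-abelian} already satisfy several constraints coming from the hyperhermitian, unimodular and $SL(2,\mathbb{H})$ conditions, and these constraints must be propagated carefully through the rank computation in order to isolate $\tilde f$ and $a$ as the only obstructions.

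Finally, I would observe that when $\tilde f=0$ and $a=0$ the non-trivial brackets recorded in Theorem~\ref{thm:almost-abelian} become strictly triangular, so $d$ strictly lowers a natural filtration on $\mfg^*$ and $\mfg$ is forced to be nilpotent. The non-existence of HKT metrics then follows from Corollary~\ref{N-cor}: the presence of a non-trivial $\bar J$-anti-invariant class together with the explicit evaluation of $\partial\Omega$ in the almost abelian normal form prevents the attainment of the HKT threshold $h^+_{\bar J}=4$, leaving $h^+_{\bar J}=2$ and hence no HKT metric on the corresponding nilmanifold.
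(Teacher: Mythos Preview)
Your overall strategy matches the paper's: the proof in Section~5.1 carries out exactly the explicit linear-algebra computation you describe. An adapted basis and $(1,0)$-coframe $\{\varphi_1,\varphi_2,\varphi_3,\varphi_4\}$ are fixed, the structure equations for $\partial\varphi_i$ are written out in the parameters $a_{11},a_{21},a_{13},a_{23}$ (the entries of $\tilde f$), $a$, and $v$, and then the unimodular constraint $3a+4a_{11}=0$ together with the $SL(2,\mathbb{H})$ constraint $a=-a_{11}$ forces $a=a_{11}=0$. The paper then tests the two explicit imaginary forms $\Phi_1=\varphi_{12}-\varphi_{43}$ and $\Phi_2=\varphi_{13}-\varphi_{24}$, finds $\partial\Phi_i=0$ precisely when $a_{21}=a_{13}=a_{23}=0$, i.e.\ $\tilde f=0$, and observes that these forms are never $\partial$-exact because $\partial A^{1,0}$ lands in $\langle\varphi_{12},\varphi_{13},\varphi_{14}\rangle$. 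Your plan to parametrize a generic imaginary $(2,0)$-form and solve the resulting linear system is the same computation in slightly more abstract dress. The nilpotency argument you give (strict triangularity of the remaining brackets once $\tilde f=0$ and $a=0$) is also what the paper has in mind.

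Where your proposal goes off course is the final ``no HKT'' step. You invoke Corollary~\ref{N-cor} and claim that the existence of a non-trivial $\bar J$-anti-invariant class obstructs $h^+_{\bar J}=4$; this is not correct. Section~\ref{nilpotent} shows that $h^-_{\bar J}=2$ for \emph{every} $8$-dimensional nilpotent hypercomplex Lie algebra, HKT or not, so anti-invariant classes do not distinguish the two cases. The paper's argument is more direct and does not pass through $h^+_{\bar J}$: once $\tilde f=0$ and $a=0$, the non-abelian hypothesis forces $v\neq 0$, and the explicit HKT characterization already obtained in Section~5.1 (equivalently Theorem~\ref{almost-abelian-hkt}) shows that HKT requires $v=0$. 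Alternatively, since $v\neq 0$ makes $\partial\varphi_2$ or $\partial\varphi_3$ non-zero, the hypercomplex structure is not abelian, and one concludes via \cite{MR2496748} that no HKT metric exists. Either of these replaces your last paragraph cleanly.
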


 \medskip
\noindent{\sl Acknowledgments.} Part of this work has been carried on during the stay of the second author at the Graduate Center of the City University of New York. She would like to thank Mehdi Lejmi and the City University of New York for the invitation, financial support, and hospitality. The authors are grateful to Adri\'an Andrada and Maria Laura Barberis for the private communication about the overlap with~\cite{Andrada:2022aa}.  The authors would like to thank Gueo Grantcharov, Yuri Ustinovskiy and Scott Wilson for useful discussions.
  \section{Preliminaries}

{{}    
In this section we will recall some well known facts about hypercomplex manifolds and fix some notations. 
Let $M$ be a smooth manifold and $L$ be a complex structure on $M$. Then, $L$ acts as an isomorphism on the space of $(p,q)$-forms on $M$ with respect to $L$ via
$$
L \alpha = {\left(\sqrt{-1}\right)}^{p-q}\alpha, \quad\alpha\in A^{p,q}_L(M),
$$
where we denote by $A^{p,q}_L(M)$ the space of $(p,q)$-forms on $M$ (here the bi-degree is taken with respect to the complex structure $L$).
A \emph{hypercomplex manifold} is a smooth manifold $ M$ of real dimension $4n$ equipped with three complex structures $ I,J,K $ that anticommute with each other, and such that $ IJ=K $. In particular, this induces a $2$-sphere of complex structures on $M$ given by
$$
\left\lbrace aI+bJ+cK\mid a^2+b^2+c^2=1\right\rbrace.
$$
A Riemannian metric $g$ on $ M $ that is Hermitian with respect to the three complex structures $ I,J,K $ is called \emph{hyperhermitian}.
We set $\omega_L(x,y)=g(Lx,y)$ for the fundamental form, with $L=I,J,K$.\\
One can define a $2$-form on $M$ by
$$
\Omega:=\frac{1}{2}\left(\omega_J+\sqrt{-1}\omega_K\right),
$$
and it is easy to see that
$$
\Omega\in A^{2,0}_I(M).
$$\\
Then, $(M,I,J,K,g)$ is called \emph{hyperk\"ahler} if $d\Omega=0$ and it is called \emph{hyperk\"ahler with torsion}, or briefly \emph{HKT}, if $\partial\Omega=0$, where again the complex differential operator $\partial$ is taken with respect to the complex structure $I$. 
In terms of fundamental forms, $g$ is hyperk\"ahler if and only if 
$$
d\omega_I=d\omega_J=d\omega_K=0
$$ 
and, as proven in \cite{MR1782143} it is HKT if and only if 
$$
d^{c,I}\omega_I=d^{c,J}\omega_J=d^{c,K}\omega_K,
$$
where $d^{c,L}:=L^{-1}dL$, with $L=I,J,K$.\\
Notice that if $(M,I,J,K,g)$ is a $4$-dimensional hyperhermitian manifold then it is clearly HKT for dimensional reasons but this is not true in general in higher dimension. Remark that $g$ is related to $\Omega$ by
\begin{equation}\label{eq:g-omega}
g(x,\bar y)=\Omega(x,J\bar y), \qquad x,y\in T^{1,0}_I(M).
\end{equation}
Since $JI=-IJ$ notice that $J:A^{p,q}_I(M)\to A^{q,p}_I(M)$.
We recall the following:
\begin{defn}
A form $\eta\in A^{2p,0}_I(M)$ is called \emph{real} if $J\bar\eta=\eta$. A real (2,0)-form $\eta$ is called \emph{q-positive} if $\eta(x,J\bar x)>0$, for $x\in T^{1,0}_I(M)$, $x\neq 0$.
\end{defn}
In particular, an HKT structure $\Omega$ is
\begin{itemize}
\item real $J\Omega=\bar\Omega,$
\item q-positive $\Omega(x,J\bar x)>0$, for $x\in T^{1,0}_I(M)$, $x\neq 0,$
\end{itemize}
and vice versa, a real, q-positive, $\partial$-closed $(2,0)$-form defines an HKT structure via the formula (\ref{eq:g-omega}).\\
Let $(M,I,J,K)$ be a compact $4n$-dimensional hypercomplex manifold. An important differential operator in this setting is the following
$$
\del_J:A^{p,q}_I(M)\to A^{p+1,q}_I(M),\qquad \del_J:=J^{-1}\delbar J,
$$
where the operator $\delbar$ is considered with respect to $I$.
It was shown in \cite{MR1958088} that
$$
\del_J^2=0\,,\qquad \del\del_J+\del_J\del=0.
$$
Notice that both operators increase the first degree by one, so if we fix $q=0$,  we get a cochain complex $(A^{p,0}_I(M),\del,\del_J)$ with two anticommuting differentials. 
For simplicity of notations we will drop the letter $I$ in $A^{p,q}_I(M)$ when it is understood.

\begin{defn}
On a hypercomplex manifold $(M,I,J,K)$ of real dimension $4n$, we say that the $\del\del_J$-Lemma holds if every $\del$-closed, $\del_J$-exact $(p,0)$-form in $A^{p,0}_I(M)$ is $\del\del_J$-exact, for any $0\leqslant p\leqslant 2n.$
\end{defn}
Furthermore, it is natural to consider the \emph{quaternionic Dolbeault cohomology groups}
  \[
H^{p,0}_\partial(M):=\frac{\mathrm{Ker}(\partial\vert_{A^{p,0}(M)})}{\partial A^{p-1,0}(M)}\,, \qquad H^{p,0}_{\partial_J}(M):=\frac{\mathrm{Ker}(\partial_J\vert_{A^{p,0}(M)})}{\partial_J A^{p-1,0}(M)}\,,
\]
and the  \emph{quaternionic Bott-Chern and Aeppli cohomology groups}  (see \cite{MR3709131})
\[
H^{p,0}_{\mathrm{BC}}(M):=\frac{\mathrm{Ker}(\partial\vert_{A^{p,0}(M)})\cap \mathrm{Ker}(\partial_J\vert_{A^{p,0}(M)})}{\partial\partial_J A^{p-2,0}(M)}\,,
\]
\[
H^{p,0}_{\mathrm{A}}(M):=\frac{\mathrm{Ker}(\partial\partial_J\vert_{A^{p,0}(M)})}{\partial A^{p-1,0}(M)+ \partial_J A^{p-1,0}(M)}.\,
\]
 It was shown in \cite{MR3709131} that these cohomology groups are isomorphic to the kernels of suitable elliptic differential operators and so, if $M$ is compact, they are finite dimensional. We will denote with $h^{p,0}_{\mathrm{BC}}$ the dimension of $H^{p,0}_{\mathrm{BC}}(M)$ and so on.\\
 In special bidegrees natural decompostions of forms appear. As discussed in \cite{MR3698291}, any $\varphi\in A^{2,0}_I(M)$ can be written as
 $$
 \varphi=\varphi^{\bar J,+}+\varphi^{\bar J,-},
 $$
 where
$$
\varphi^{\bar J,+}:=\frac{1}{2}\left(\varphi+J\bar \varphi \right),\qquad             \varphi^{\bar J,-}:=\frac{1}{2}\left(\varphi-J\bar \varphi \right).
$$ 
 This gives a decomposition of the bundle $\Lambda^{2,0}(M)$ in
 $$
 \Lambda^{2,0}(M)=\Lambda^{\bar J,+}(M)\oplus\Lambda^{\bar J,-}(M),
 $$
 where sections of $\Lambda^{\bar J,+}(M)$ are \emph{real} forms and are denoted by $\Omega^{\bar J,+}(M)$, and sections of $\Lambda^{\bar J,-}(M)$ satisfy $J\bar\varphi=-\varphi$ and are called \emph{imaginary} and are denoted by $\Omega^{\bar J,-}(M)$.\\
 For any compact hypercomplex manifold one can define the following two subgroups of $H^{2,0}_{\del}(M)$, the \emph{$\bar J$-invariant} subgroup
 $$
 H_{\del}^{\bar J,+}(M)=\left\lbrace a\in H^{2,0}_{\del}(M)\,|\, \exists\varphi\in a \text{ such that } \del\varphi=0 \text{ and } \varphi\in\Omega^{\bar J,+}(M)\right\rbrace,
 $$
 and the \emph{$\bar J$-anti-invariant subgroup}
  $$
 H_{\del}^{\bar J,-}(M)=\left\lbrace a\in H^{2,0}_{\del}(M)\,|\, \exists\varphi\in a \text{ such that } \del\varphi=0 \text{ and }  \varphi\in\Omega^{\bar J,-}(M)\right\rbrace\,.
 $$
 We recall the following definition (cf. \cite{MR3698291})
 \begin{defn}
 A hypercomplex structure $(I,J,K)$ on a smooth manifold $M$ is called
 \begin{itemize}
 \item \emph{$C^\infty$-pure} if
 $$
  H_{\del}^{\bar J,+}(M)\cap H_{\del}^{\bar J,-}(M)=\left\lbrace 0\right\rbrace;
 $$
  \item \emph{$C^\infty$-full} if
 $$
  H_{\del}^{\bar J,+}(M)+H_{\del}^{\bar J,-}(M)=H^{2,0}_{\del}(M);
 $$
  \item \emph{$C^\infty$-pure-and-full} if
 $$
  H_{\del}^{\bar J,+}(M)\oplus H_{\del}^{\bar J,-}(M)=H^{2,0}_{\del}(M).
 $$
 \end{itemize}
 \end{defn}}
 
 A hypercomplex manifold $(M,I,J,K)$ admits a unique torsion-free connection preserving $I,J,K$ called the Obata connection~\cite{MR95290}.
 The holonomy of the Obata connection then lies in the general quaternionic linear group $GL(n,\mathbb{H})$ (see for example~\cite{MR2959038}).
 However, in many examples such as nilmanifolds~\cite{MR2496748}, the holonomy is actually contained in $SL(n,\mathbb{H})$ (see~\cite{MR1958088,MR2384800}). 
 \begin{defn}
 A hypercomplex manifold $(M,I,J,K)$ of real dimension $4n$ is called an $SL(n,\mathbb{H})$-manifold if the holonomy of the Obata connection lies in 
 $SL(n,\mathbb{H})$.
  \end{defn}
{Verbitsky in \cite{MR2384800} proved that if a compact $4n$-dimensional hypercomplex manifold $(M,I,J,K)$ is $SL(n,\mathbb{H})$ then the canonical bundle of $(M,I)$ is holomorphically trivial. Moreover, the vice versa also holds under the additional assumption that there exists an HKT metric.}
We denote an $SL(n,\mathbb{H})$-manifold by $(M,I,J,K,\Phi)$, where $\Phi$ is a nowhere degenerate form in $A^{2n,0}_I(M)$ and we can also assume that $\Phi=J\bar{\Phi}$, in particular $\del \Phi=\del_J\Phi=0.$ It was shown in \cite{MR3698291} that every hypercomplex structure on a compact $SL(2,\mathbb{H})$-manifold is $C^\infty$-pure-and-full.

  \section{Pure and Full Hypercomplex structures}\label{pure-full}
  
  In this section we will focus on the $C^\infty$-pure-and-full condition. First of all we prove the following which is an analogue of the complex case (see~\cite{MR2576281,MR2601348} and~\cite[Theorem 2.4]{MR2817781})

   \begin{thm}\label{thm:deldeljlemma-pure-full}
   {Let $(M,I,J,K)$ be a compact hypercomplex manifold that satisfies} the $\partial\partial_J$-Lemma then {the hypercomplex structure} is $C^\infty$-pure-and-full.  \end{thm}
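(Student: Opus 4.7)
The plan is to treat $C^\infty$-purity and $C^\infty$-fullness separately, with both halves resting on the sign identities
\[
J\overline{\partial\varphi^+} = -\partial_J\varphi^+, \qquad J\overline{\partial\varphi^-} = +\partial_J\varphi^-,
\]
valid for any real $(2,0)$-form $\varphi^+$ (i.e.\ $J\bar\varphi^+=\varphi^+$) and imaginary $(2,0)$-form $\varphi^-$ (i.e.\ $J\bar\varphi^-=-\varphi^-$). These follow by a direct computation with $\partial_J=J^{-1}\overline{\partial}J$, using that $J$ swaps $A^{p,q}$ with $A^{q,p}$ and squares to $(-1)^k$ on $k$-forms. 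An analogous calculation gives $J\overline{\partial\beta}=-\partial_J(J\bar\beta)$ for any $\beta\in A^{1,0}(M)$. As an immediate corollary, every $\partial$-closed real or imaginary $(2,0)$-form is automatically $\partial_J$-closed.

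For purity, suppose $[\varphi^+]=[\varphi^-]$ in $H^{2,0}_\partial(M)$, so $\varphi^+-\varphi^-=\partial\beta$ for some $\beta\in A^{1,0}(M)$. Applying $J\overline{\,\cdot\,}$ to this equation and using $J\bar\varphi^\pm=\pm\varphi^\pm$ yields $\varphi^++\varphi^-=J\overline{\partial\beta}=-\partial_J(J\bar\beta)$. Combining with the original equation gives
\[
\omega \;:=\; \varphi^+-\tfrac{1}{2}\partial\beta \;=\; \varphi^-+\tfrac{1}{2}\partial\beta \;=\; -\tfrac{1}{2}\partial_J(J\bar\beta),
\]
which is $\partial_J$-exact by construction and $\partial$-closed since $\partial\varphi^\pm=0$. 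The $\partial\partial_J$-Lemma forces $\omega=\partial\partial_J\eta$ for some $\eta\in C^\infty(M,\mathbb{C})$, so $\omega$ is $\partial$-exact. Hence $\varphi^+=\tfrac{1}{2}\partial\beta+\omega$ and $\varphi^-=-\tfrac{1}{2}\partial\beta+\omega$ are both $\partial$-exact, giving $[\varphi^\pm]=0$ and proving $H^{\bar J,+}_\partial(M)\cap H^{\bar J,-}_\partial(M)=\{0\}$.

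For fullness, let $\alpha\in A^{2,0}(M)$ be $\partial$-closed. First I would produce a $\partial$-cohomologous representative that is also $\partial_J$-closed: since $\partial\partial_J=-\partial_J\partial$, the form $\partial_J\alpha\in A^{3,0}(M)$ is $\partial$-closed and tautologically $\partial_J$-exact, so the $\partial\partial_J$-Lemma produces $\gamma\in A^{1,0}(M)$ with $\partial_J\alpha=\partial\partial_J\gamma$; the form $\tilde\alpha:=\alpha+\partial\gamma$ then satisfies $[\tilde\alpha]=[\alpha]$ and $\partial\tilde\alpha=0=\partial_J\tilde\alpha$. Splitting $\tilde\alpha=\tilde\alpha^++\tilde\alpha^-$ into real and imaginary parts and applying $J\overline{\,\cdot\,}$ to $\partial\tilde\alpha^++\partial\tilde\alpha^-=0$ yields $-\partial_J\tilde\alpha^++\partial_J\tilde\alpha^-=0$; combined with $\partial_J\tilde\alpha^++\partial_J\tilde\alpha^-=0$ this forces $\partial_J\tilde\alpha^\pm=0$, and the sign identities then give $\partial\tilde\alpha^\pm=0$. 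Therefore $[\tilde\alpha^\pm]\in H^{\bar J,\pm}_\partial(M)$ and $[\alpha]=[\tilde\alpha^+]+[\tilde\alpha^-]$.

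The main technical hurdle is the careful verification of the sign identities, since the relevant factors of $J^2$ vary with bidegree ($J^2=1$ on $A^2$, but $J^2=-1$ on $A^1$ and $A^3$). Once these are established, both halves of the theorem reduce cleanly to a single application of the $\partial\partial_J$-Lemma as formulated in the preliminaries.
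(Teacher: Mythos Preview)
Your proof is correct and follows essentially the same approach as the paper's: both rest on the observation that a real or imaginary $(2,0)$-form is $\partial$-closed iff $\partial_J$-closed, invoke the $\partial\partial_J$-Lemma once for purity and once to replace $\alpha$ by a $\partial_J$-closed representative before splitting for fullness. The only cosmetic difference is in the purity step, where the paper applies the lemma directly to $\varphi^+-\varphi^-=\partial\gamma$ (which is $\partial_J$-closed) and then separates the real and imaginary parts of the resulting potential, whereas you form the average $\omega=\tfrac{1}{2}(\varphi^++\varphi^-)$ and apply the lemma to that; your variant has the small virtue of invoking the $\partial\partial_J$-Lemma precisely as formulated in the preliminaries ($\partial$-closed and $\partial_J$-exact $\Rightarrow$ $\partial\partial_J$-exact) rather than tacitly using its symmetric form.
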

  \begin{proof}
  First, we prove that the hypercomplex structure is $C^\infty$-pure. Let ${\mathfrak{a}}\in H^{\bar{J},+}_{\partial}(M)\cap H^{\bar{J},-}_{\partial}(M).$
  Choose a $\partial$-closed $(2,0)$-form $\alpha\in \Omega^{\bar{J},+} (M)$ and a $\partial$-closed $(2,0)$-form $\beta\in \Omega^{\bar{J},-}(M)$ both representatives of $\mathfrak{a}$. Then, we have $\alpha-\beta=\partial\gamma$, for some $(1,0)$-form. Since $\alpha\in \Omega^{\bar{J},+}(M) $
  and  $\beta\in \Omega^{\bar{J},-}(M)$, then $\alpha$ and $\beta$ are also $\partial_J$-closed. Hence $\partial\gamma$ is $\partial_J$-closed.
  The hypercomplex structure satisfies the $\partial\partial_J$-lemma so $\partial\gamma=\partial\partial_Jw$, for some complex-valued function $w.$
  We write $w=u+\sqrt{-1}v$, for some real-valued functions $u,v.$ Hence $\alpha-\beta=\partial\partial_J\left(u+\sqrt{-1}v\right)$ and so 
  \begin{equation*}
  \alpha-\partial\partial_Ju=\beta+\sqrt{-1}\partial\partial_Jv.
  \end{equation*}
 Since $ \alpha-\partial\partial_Ju\in \Omega^{\bar{J},+}(M)$ and $\beta+\sqrt{-1}\partial\partial_Jv.\in\Omega^{\bar{J},-}(M),$ we deduce that $\alpha=\partial\partial_Ju$ and $\beta=-\sqrt{-1}\partial\partial_Jv$ and so ${\mathfrak{a}}$ is the zero class.
  
 Now, we would like to prove that the hypercomplex structure is $C^\infty$-full. Let $\alpha$ be a $\partial$-closed $(2,0)$-form representative of ${\mathfrak{a}}\in H^{2,0}_\partial(M).$ First, we claim that we can choose $\alpha$ such that
 $\partial\alpha=\partial_J\alpha=0$. Indeed, the form $\partial_J\alpha$ is $\partial$-closed and $\partial_J$-exact hence $\partial_J\alpha=\partial_J\partial\beta$, for some $1$-form $\beta.$ Hence, $\alpha-\partial\beta$ is $\partial$-closed, $\partial_J$-closed and cohomologous to $\alpha.$ Now, we decompose $\alpha$ as $\alpha=\alpha^{\bar{J},+}+\alpha^{\bar{J},-}.$ Because $\partial\alpha=\partial_J\alpha=0$, we deduce that
 $\partial\alpha^{\bar{J},+}=\partial\alpha^{\bar{J},-}=0$ and so we get the classes $[\alpha^{\bar{J},+}]\in H^{\bar{J},+}_{\partial}(M)$ and $[\alpha^{\bar{J},-}]\in H^{\bar{J},-}_{\partial}(M).$ The theorem follows.

   \end{proof}
  
  {As a consequence of Theorem \ref{thm:deldeljlemma-pure-full} and \cite{MR3709131} we have}
  
  \begin{cor}
  The hypercomplex structure on a compact HKT $SL(n,\mathbb{H})${-manifold} is $C^\infty$-pure-and-full.
  \end{cor}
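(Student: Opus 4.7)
The plan is very short: combine Theorem~\ref{thm:deldeljlemma-pure-full} with a known result from \cite{MR3709131}. Specifically, the strategy is to recall that on a compact HKT $SL(n,\mathbb{H})$-manifold the quaternionic $\partial\partial_J$-Lemma is known to hold (this is one of the main outputs of the quaternionic Hodge theory developed in \cite{MR3709131}, analogous to the way the classical $dd^c$-Lemma holds on compact K\"ahler manifolds via Hodge theory). Once this is in hand, Theorem~\ref{thm:deldeljlemma-pure-full} applies verbatim and gives the $C^\infty$-pure-and-full conclusion.

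Concretely, I would first state that on such a manifold, every $\partial$-closed, $\partial_J$-exact $(p,0)$-form in $A^{p,0}_I(M)$ is $\partial\partial_J$-exact; this is the quaternionic analogue of the $dd^c$-Lemma and is precisely what is referenced in \cite{MR3709131}. Then I would cite Theorem~\ref{thm:deldeljlemma-pure-full}, whose hypothesis is exactly the $\partial\partial_J$-Lemma on a compact hypercomplex manifold, to conclude that
\[
H^{2,0}_{\partial}(M,I)=H^{\bar{J},+}_{\partial}(M)\oplus H^{\bar{J},-}_{\partial}(M),
\]
which is the definition of $C^\infty$-pure-and-full.

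The only subtle point, and hence the main (soft) obstacle, is verifying that the HKT plus $SL(n,\mathbb{H})$ hypotheses are indeed the ones used in \cite{MR3709131} to deduce the $\partial\partial_J$-Lemma: the $SL(n,\mathbb{H})$ assumption provides a nowhere-vanishing holomorphic volume form $\Phi\in A^{2n,0}_I(M)$ with $J\bar\Phi=\Phi$, which together with the HKT metric allows one to build Hodge-theoretic adjoints for $\partial$ and $\partial_J$ on the complex $(A^{\bullet,0}_I(M),\partial,\partial_J)$ and to mimic the classical K\"ahler identities. These identities then force the $\partial\partial_J$-Lemma in the standard way. Since this is already established in \cite{MR3709131}, the corollary is an immediate consequence. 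No further work is needed beyond citing the appropriate statement.
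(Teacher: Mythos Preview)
Your proposal is correct and follows exactly the same route as the paper: the corollary is stated immediately after Theorem~\ref{thm:deldeljlemma-pure-full} with the one-line justification that it is ``a consequence of Theorem~\ref{thm:deldeljlemma-pure-full} and \cite{MR3709131}''. Your added explanation of why HKT together with the $SL(n,\mathbb{H})$ condition yields the $\partial\partial_J$-Lemma is accurate and simply unpacks what the paper leaves implicit in the citation.
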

  
 Now, on a compact $SL(2,\mathbb{H})$-manifold $(M,I,J,K,\Phi)$ equipped with a hyperhermitian metric $g$, we define the operator:
 \begin{align*}
 P:\Omega^{\bar{J},-}(M)&\rightarrow \Omega^{\bar{J},-}(M),\\
 \alpha&\mapsto \left(\partial\partial^\star\alpha\right)^{\bar{J},-},
 \end{align*}
 where $\left(\cdot\right)^{\bar{J},-}$ is the imaginary part, and $\partial^\star$ is defined as the adjoint of $\del$ with respect to the (global) Hermitian inner product $$\langle\alpha,\beta\rangle=\int_Mh(\alpha,\beta)\,\frac{\Omega^2\wedge\bar{\Phi}}{2},$$
 (here $2h=g-\sqrt{-1}\omega_I,$ and $\Omega$ is the $(2,0)$-form induced by $g$). Moreover, $\partial^\star=-\ast\del\ast$, where
 $\ast$ is the Hodge star operator defined by (see~\cite{MR3698291} for more details)
 $$\alpha\wedge\ast\beta\wedge\bar{\Phi}=h(\alpha,\beta)\,\frac{\Omega^2\wedge\bar{\Phi}}{2}.$$ We also define the Laplacian $\Delta_\del=\partial\partial^\star+\partial^\star\partial.$
  \begin{lem}\label{operator}
  On a compact $SL(2,\mathbb{H})$-manifold $(M,I,J,K,\Phi)$ equipped with a hyperhermitian metric $g$, the operator $P$ is a self-adjoint strongly elliptic linear operator with kernel the $\Delta_\partial$-harmonic imaginary $(2,0)$-forms.
  \end{lem}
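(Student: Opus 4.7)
My plan is to establish the three nontrivial assertions—self-adjointness, strong ellipticity, and the identification of the kernel—by repeatedly using one structural input: the pointwise decomposition $\Lambda^{2,0} = \Lambda^{\bar J,+}\oplus\Lambda^{\bar J,-}$ is orthogonal with respect to the Hermitian inner product $h$ on $(2,0)$-forms induced by the hyperhermitian metric. This follows from a short pointwise computation using the $J$-invariance of $g$ together with the defining conditions $J\bar\varphi=\pm\varphi$ on the two summands; by integration against $\tfrac12\Omega^2\wedge\bar\Phi$ it promotes to the global orthogonality $\langle\Omega^{\bar J,+}(M),\Omega^{\bar J,-}(M)\rangle=0$.

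Given this orthogonality, self-adjointness is a short chain: for $\alpha,\beta\in\Omega^{\bar J,-}(M)$,
\[
\langle P\alpha,\beta\rangle = \langle(\partial\partial^*\alpha)^{\bar J,-},\beta\rangle = \langle\partial\partial^*\alpha,\beta\rangle = \langle\partial^*\alpha,\partial^*\beta\rangle = \langle\alpha,\partial\partial^*\beta\rangle = \langle\alpha,P\beta\rangle,
\]
where the second and last equalities drop and restore the projection using that $\beta$ (respectively $\alpha$) is imaginary and therefore orthogonal to the $\Omega^{\bar J,+}$-component. For strong ellipticity I compute the principal symbol: writing $\sigma(\partial)(\xi)=\xi^{1,0}\wedge\cdot$ and $\sigma(\partial^*)(\xi)=\iota_{v_\xi}$ with $v_\xi\in T^{1,0}_xM$ the $h$-dual of $\xi^{1,0}$, the same orthogonality trick yields
\[
\langle\sigma(P)(\xi)\alpha,\alpha\rangle = \langle\xi^{1,0}\wedge\iota_{v_\xi}\alpha,\alpha\rangle = |\iota_{v_\xi}\alpha|^2
\]
for $\alpha\in\Lambda^{\bar J,-}_x$. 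It remains to check that $\iota_{v_\xi}:\Lambda^{\bar J,-}_x\to\Lambda^{1,0}_x$ is injective for every nonzero $\xi$; in the $n=2$ case this is a direct verification in a quaternionic adapted frame where $\Lambda^{\bar J,-}_x$ is spanned by $\{e^1\wedge e^3-e^2\wedge e^4,\,e^1\wedge e^4+e^2\wedge e^3\}$, and contracting with any $v_\xi$ yields a linear map of full rank $2$.

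For the kernel, the inclusion $\{\Delta_\partial\text{-harmonic imaginary}\}\subseteq\ker P$ is immediate since $\partial^*\alpha=0$ forces $P\alpha=0$. Conversely, specializing the pairing identity gives $\langle\alpha,P\alpha\rangle=\|\partial^*\alpha\|^2$, so $P\alpha=0$ already forces $\partial^*\alpha=0$. The main obstacle is upgrading this to $\partial\alpha=0$, and this is where the $SL(2,\mathbb H)$ hypothesis must be used in an essential way. The plan is to exploit the duality provided by the nowhere vanishing form $\Phi$: in the critical middle case $2n-p=p$ (here $n=p=2$) the trivialization $\Phi$ induces a $\star_\Phi:A^{2,0}\to A^{2,0}$, and a Verbitsky-type identity relates $\partial^*$ on $A^{2,0}$ to a $\partial$-type operator on $A^{2,0}$ compatibly with the real/imaginary decomposition. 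Combined with the identity $\partial_J\alpha = J^{-1}\overline{\partial\alpha}$ valid for imaginary $\alpha$, this should convert $\partial^*\alpha=0$ into $\partial\alpha=0$ and thus complete the identification of $\ker P$ with the $\Delta_\partial$-harmonic imaginary $(2,0)$-forms.
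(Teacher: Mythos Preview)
Your self-adjointness argument is fine. The difficulties are in the other two claims, and in both cases you are missing the single structural fact that does the work in the paper: on an $SL(2,\mathbb H)$-manifold the quaternionic Hodge star (defined via $\Phi$ and $\Omega$) satisfies $\ast\alpha=\alpha$ for every imaginary $(2,0)$-form $\alpha$.

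For the kernel, you correctly reach $\partial^{\star}\alpha=0$, but the passage to $\partial\alpha=0$ is left as a gesture toward an unspecified ``Verbitsky-type identity''. The paper's argument is one line once you know $\ast\alpha=\alpha$: then $\partial^{\star}\alpha=-\ast\partial\ast\alpha=-\ast\partial\alpha$, and since $\ast$ is an isomorphism this gives $\partial^{\star}\alpha=0\Longleftrightarrow\partial\alpha=0$. No further identity is needed.

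For ellipticity, your direct symbol computation has a genuine error: $\Lambda^{\bar J,-}_x$ is not two-dimensional. The map $\varphi\mapsto J\bar\varphi$ is a real structure on the complex $6$-dimensional space $\Lambda^{2,0}_x$, so its $(-1)$-eigenspace has \emph{real} dimension $6$; the two forms you list span only a proper subspace. In particular $\iota_{v_\xi}$ is \emph{not} injective on $\Lambda^{\bar J,-}_x$: for instance, in your frame $\sqrt{-1}\,e^3\wedge e^4$ is imaginary (since $e^3\wedge e^4$ is real) but is annihilated by contraction with the $(1,0)$-vector dual to $e^1$. Hence your quadratic form $|\iota_{v_\xi}\alpha|^2$ is only semidefinite and the argument cannot close. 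The paper instead uses $\ast\alpha=\alpha$ to rewrite
\[
P(\alpha)=\tfrac12\Delta_\partial\alpha-\tfrac14\,h(\Delta_\partial\alpha,\Omega)\,\Omega,
\]
the identity $(\mathrm{Id}+\ast)\,\partial\partial^{\star}\alpha=\Delta_\partial\alpha$ being an immediate consequence of $\ast\alpha=\alpha$; an explicit calculation then shows $h(\Delta_\partial\alpha,\Omega)$ is first order in $\alpha$, so $P$ has the same principal symbol as $\tfrac12\Delta_\partial$.
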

  \begin{proof}
  If $\alpha$ is in the kernel of $P$, then $0=\langle  \left(\partial\partial^\star\alpha\right)^{\bar{J},-},\alpha\rangle=\langle  \partial^\star\alpha,\partial^\star\alpha\rangle.$ Hence $\partial^\star\alpha=\partial\alpha=0$ because $\ast \alpha=\alpha$ so $\alpha$ is  $\Delta_\partial$-harmonic. 
 Furthermore, we can express $P$ as follows
  \begin{align*}
  P(\alpha)&= \left(\partial\partial^\star\alpha\right)^{\bar{J},-},\\
  &=\frac{1}{2}\left(Id+\ast\right)\left(\partial\partial^\star\alpha\right)-\frac{1}{4}h\left(\left(Id+\ast\right)\left(\partial\partial^\star\alpha\right),\Omega    \right)\,\Omega,\\
  &=\frac{1}{2}\Delta_\partial\alpha-\frac{1}{4}h\left(\Delta_\partial\alpha,\Omega\right)\,\Omega.
  \end{align*}
  We would like to compute the principal symbol of the operator $P.$ First, we remark that
  $$h\left(\partial\partial^\star\alpha,\Omega\right)=h\left(\ast\partial\partial^\star\alpha,\ast\Omega\right)=-h\left(\ast\partial\ast\partial\ast\alpha,\Omega\right)=h\left(\partial^\star\partial\alpha,\Omega\right).$$
 A straightforward computation shows that $h\left(\Delta_\partial\alpha,\Omega\right)$ is a first order operator on $\alpha.$ Indeed,
  \begin{align*}
  \ast h\left(\Delta_\partial\alpha,\Omega\right)&=2\ast h\left(\partial^\star\partial\alpha,\Omega\right),\\
 &=- 2\ast h\left(\ast\partial\ast\partial\alpha,\Omega\right),\\
  &=-2\partial\ast\partial\alpha\wedge\Omega,\\
  &=-2\partial\left(\ast\partial\alpha\wedge\Omega\right)+2\ast\partial\alpha\wedge\partial\Omega,\\
&= -2\partial\left(L_{\Omega}\ast\partial\alpha\right)+2\ast\partial\alpha\wedge\partial\Omega,\\
&=2\partial\left(\ast\Lambda_\Omega\partial\alpha\right)+2\ast\partial\alpha\wedge\partial\Omega,\\
&=2\partial\left(\ast h\left(\partial\alpha,\Omega\right)\right)+2\ast\partial\alpha\wedge\partial\Omega,\\
&=-2\partial\left(\ast h\left(\alpha,\partial\Omega\right)\right)+2\ast\partial\alpha\wedge\partial\Omega,
  \end{align*}
where we use the fact that $h(\alpha,\Omega)=0.$ Here $L_{\Omega}$ denotes the operator $L_{\Omega}(\cdot)=\cdot\wedge\Omega$ and $\Lambda_{\Omega}:=\ast L_{\Omega}\ast$ is the contraction by $\Omega.$
We conclude that the principal symbol of $P$ is the same as $\frac{1}{2}\Delta_\partial$. The lemma follows.
  \end{proof}
 Denote by $h^{\pm}_{\bar{J}}$ the dimension of $H^{\bar{J},\pm}_{\partial}(M)$ and by $h^{2,0}_{\partial}$ the dimension of $H^{2,0}_{\partial}(M)$. Then, on a compact $SL(2,\mathbb{H})$-manifold $(M,I,J,K,\Phi)$, we have by~\cite{MR3698291} 
 \begin{equation}\label{dim-equality}
 h^{2,0}_{\partial}=h^{+}_{\bar{J}}+h^{-}_{\bar{J}}.
 \end{equation}
 We can then prove a path-wise semi-continuity property of $h^{-}_{\bar{J}}.$ This is similar to the result obtained by~\cite{MR3032090} on almost-complex manifolds. 
  \begin{cor}\label{semi-cont}
  Let $(I_t,J_t,K_t,\Phi_t)$ be a smooth family of $SL(2,\mathbb{H})$-structures on {a compact manifold} $M$ with $t\in[0,1]$. Then, $h^{-}_{\bar{J}_t}$ 
  is an upper-semi-continuous function in $t.$
  \end{cor}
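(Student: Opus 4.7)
The plan is to realize $h^{-}_{\bar{J}_t}$ as the kernel dimension of a smoothly varying family of self-adjoint strongly elliptic operators, and then invoke the classical upper-semi-continuity principle for kernels of such families. First I would fix a smooth family of hyperhermitian metrics $g_t$ compatible with $(I_t,J_t,K_t)$, obtained for instance by averaging a background Riemannian metric over the quaternionic action; together with the given smooth data $\Phi_t$ this yields smooth families of $(2,0)$-forms $\Omega_t$, pointwise and global Hermitian inner products, Hodge stars $\ast_t$, and adjoints $\partial_t^{\star}$. The decomposition $\Lambda^{2,0}_{I_t}=\Lambda^{\bar{J}_t,+}\oplus\Lambda^{\bar{J}_t,-}$ also varies smoothly in $t$, so the operator $P_t:\Omega^{\bar{J}_t,-}(M)\to\Omega^{\bar{J}_t,-}(M)$ of Lemma~\ref{operator} constitutes a smooth family. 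By that lemma, $\dim\ker P_t=h^{-}_{\bar{J}_t}$.

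Next I would transfer $P_t$ to a fixed domain so that standard semi-continuity results apply. Fix $t_0\in[0,1]$. For $t$ in a small neighborhood of $t_0$, the $g_t$-orthogonal projection $\mathrm{pr}_t:\Lambda^{\bar{J}_{t_0},-}\to\Lambda^{\bar{J}_t,-}$ is a smooth family of bundle isomorphisms with $\mathrm{pr}_{t_0}=\id$. The conjugated operators $\widetilde{P}_t:=\mathrm{pr}_t^{-1}\circ P_t\circ\mathrm{pr}_t$ then form a smooth family of self-adjoint strongly elliptic operators on the fixed space $\Omega^{\bar{J}_{t_0},-}(M)$, with $\dim\ker\widetilde{P}_t=h^{-}_{\bar{J}_t}$.

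Finally I would appeal to the standard fact that for a smooth family of self-adjoint elliptic differential operators of constant order on a fixed compact manifold, the function $t\mapsto\dim\ker\widetilde{P}_t$ is upper-semi-continuous. This follows from the spectral theory of elliptic operators: the discrete spectrum depends continuously on the parameter, so positive eigenvalues at $t_0$ remain positive in a neighborhood and the multiplicity of the zero eigenvalue can only drop. Applying this at every $t_0\in[0,1]$ proves the corollary. The main technical obstacle is the smooth identification of the varying target spaces $\Omega^{\bar{J}_t,-}(M)$ with a fixed one; once that is set up, the result is a direct application of Lemma~\ref{operator} together with the classical semi-continuity principle.
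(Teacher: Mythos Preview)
Your proposal is correct and follows essentially the same route as the paper: the paper's proof consists of a single sentence invoking Lemma~\ref{operator} together with the classical upper-semi-continuity of the kernel dimension for a family of elliptic operators (Kodaira--Morrow). You have simply unpacked the details the paper leaves implicit---the choice of a smooth family of hyperhermitian metrics, the smooth variation of the operators $P_t$, and the identification of the moving bundles $\Lambda^{\bar{J}_t,-}$ with a fixed one near each $t_0$---all of which are standard maneuvers needed to make the citation rigorous.
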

  \begin{proof}
  This follows from Lemma~\ref{operator} and the upper-semi-continuity of the kernel of a family of elliptic operators~\cite[Theorem 4.3]{MR0302937}. 
  \end{proof} 
  \begin{rem}\label{rem:no-lower-semic}
  From Equation~(\ref{dim-equality}) and because the dimension $h^{2,0}_{\partial}$ depends on the choice of the complex-structure, the dimension $h^{+}_{\bar{J}}$ is not necessarily lower-semi-continuous, as we show in Example \ref{ex:deformation}.
Clearly, if the initial hypercomplex structure admits an hyperkahler metric, then along small deformations the Hodge numbers do not vary and so in that case $h^{+}_{\bar{J}}$ is lower-semi-continuous. 
  \end{rem}
  An immediate consequence of Corollary~\ref{semi-cont} is the following 
   \begin{cor}
   Let $(I_t,J_t,K_t,\Phi_t)$ be a smooth family of $SL(2,\mathbb{H})$-structures on {a compact manifold} $M$ such that $h^{-}_{\bar{J}_0}=0$ at $t=0$. Then, $h^{-}_{\bar{J}_t}=0$ for a small $t.$
   \end{cor}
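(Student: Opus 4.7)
The plan is to deduce this directly from the upper semi-continuity established in Corollary~\ref{semi-cont}, using only the fact that $h^{-}_{\bar{J}_t}$ is a non-negative integer-valued quantity.

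First, I would recall that Corollary~\ref{semi-cont} asserts that $t \mapsto h^{-}_{\bar{J}_t}$ is an upper semi-continuous function on the parameter interval $[0,1]$. By definition, upper semi-continuity at $t=0$ means that for every $\varepsilon>0$ there exists $\delta>0$ such that
\[
h^{-}_{\bar{J}_t} \;\leq\; h^{-}_{\bar{J}_0} + \varepsilon \qquad \text{for all } |t|<\delta.
\]
The hypothesis gives $h^{-}_{\bar{J}_0}=0$, so the inequality reduces to $h^{-}_{\bar{J}_t} \leq \varepsilon$.

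Next, I would exploit the integrality: since $h^{-}_{\bar{J}_t}$ is the dimension of the finite-dimensional vector space $H^{\bar{J}_t,-}_{\partial_t}(M)$ (finite-dimensionality is guaranteed by the identification with a kernel of an elliptic operator, as in Lemma~\ref{operator}), it always takes values in $\mathbb{Z}_{\geq 0}$. Choosing $\varepsilon = 1/2$ in the inequality above forces $h^{-}_{\bar{J}_t} \leq 1/2$, and hence $h^{-}_{\bar{J}_t}=0$ for all $t$ with $|t|<\delta$.

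There is essentially no obstacle here: once Corollary~\ref{semi-cont} is in hand, the argument is the standard observation that an upper semi-continuous integer-valued function that vanishes at a point vanishes in a neighborhood of that point. The only subtlety worth flagging is that the family $(I_t,J_t,K_t,\Phi_t)$ must be varied smoothly enough so that the elliptic operator $P_t$ from Lemma~\ref{operator} depends continuously on $t$; this is implicit in the smoothness of the family of $SL(2,\mathbb{H})$-structures and is exactly what Kodaira's semi-continuity theorem~\cite{MR0302937} invoked in Corollary~\ref{semi-cont} requires.
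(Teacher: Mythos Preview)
Your proof is correct and is precisely the intended argument: the paper itself offers no proof beyond labeling this an immediate consequence of Corollary~\ref{semi-cont}, and you have simply spelled out the standard reasoning that a non-negative integer-valued upper semi-continuous function vanishing at a point vanishes nearby.
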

  For any compact hypercomplex manifold {$(M,I,J,K)$}, we can define the following two subgroups of $H^{2,0}_{BC}(M)$: 
  $$H^{\bar{J},+}_{BC}(M):=\{\mathfrak{a}\in H^{2,0}_{BC}(M)\,|\,\exists\, \alpha\in\mathfrak{a} \text{ such that } \partial \alpha=\partial_J\alpha=0\text{ and } \alpha\in\Omega^{\bar{J},+}(M)\},$$
  $$H^{\bar{J},-}_{BC}(M):=\{\mathfrak{a}\in H^{2,0}_{BC}(M)\,|\,\exists\, \alpha \in\mathfrak{a} \text{ such that }\partial \alpha=\partial_J\alpha=0 \text{ and } \alpha\in\Omega^{\bar{J},-}(M) \}.$$
  We can easily deduce the following:
 \begin{prop}\label{BC-decomposition}
{Let $(M,I,J,K)$ be a compact hypercomplex manifold. Then}, $$H^{2,0}_{BC}(M)=H^{\bar{J},+}_{BC}\oplus H^{\bar{J},-}_{BC}.$$
  \end{prop}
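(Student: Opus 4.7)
The plan is to mirror the structure of the proof of Theorem~\ref{thm:deldeljlemma-pure-full}, with the key simplification that, since the Bott--Chern quotient is already taken modulo $\partial\partial_J$-exact forms, no $\partial\partial_J$-Lemma is needed. The argument splits naturally into proving $C^\infty$-fullness and $C^\infty$-purity for the Bott--Chern decomposition in turn.

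For fullness, take $\mathfrak{a}\in H^{2,0}_{BC}(M)$ represented by $\alpha\in A^{2,0}(M)$ with $\partial\alpha=\partial_J\alpha=0$, and decompose $\alpha=\alpha^{\bar{J},+}+\alpha^{\bar{J},-}$. The goal is to check that each component remains $\partial$- and $\partial_J$-closed. Starting from $\partial_J=J^{-1}\bar\partial J$ and the bigraded action $J\colon A^{p,q}\to A^{q,p}$, one derives the identities
\begin{equation*}
\partial(J\bar\alpha)=J\,\overline{\partial_J\alpha},\qquad \partial_J(J\bar\alpha)=J^{-1}\,\overline{\partial\alpha},
\end{equation*}
using $J^2=\id$ on $(0,2)$-forms in the second. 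Both right-hand sides vanish by hypothesis, so $\alpha^{\bar{J},\pm}=\tfrac{1}{2}(\alpha\pm J\bar\alpha)$ are $\partial$- and $\partial_J$-closed, producing classes $[\alpha^{\bar{J},\pm}]_{BC}\in H^{\bar{J},\pm}_{BC}(M)$ that sum to $\mathfrak{a}$.

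For purity, suppose $\mathfrak{a}\in H^{\bar{J},+}_{BC}(M)\cap H^{\bar{J},-}_{BC}(M)$, with representatives $\alpha\in\Omega^{\bar{J},+}(M)$ and $\beta\in\Omega^{\bar{J},-}(M)$, so that $\alpha-\beta=\partial\partial_J f$ for some complex-valued function $f$. The central computation is the identity
\begin{equation*}
J\,\overline{\partial\partial_J f}=\partial\partial_J\bar{f},
\end{equation*}
obtained by applying the first identity above to $\gamma=\partial f\in A^{1,0}(M)$ and combining with $\partial\partial_J+\partial_J\partial=0$ and $J^2=-\id$ on $1$-forms. Applying $\sigma(\cdot)=J\,\overline{(\cdot)}$ to $\alpha-\beta=\partial\partial_J f$, and using $J\bar\alpha=\alpha$ and $J\bar\beta=-\beta$, yields $\alpha+\beta=\partial\partial_J\bar{f}$; adding and subtracting these two equations then exhibits $\alpha$ and $\beta$ as $\partial\partial_J$-exact, so $\mathfrak{a}=0$.

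The main obstacle is purely notational: one must track carefully how $J$ and complex conjugation interact across bidegrees, remembering the degree-dependent sign $J^2=(-1)^p\id$ on $A^{p,0}(M)$. It is precisely this sign behavior that makes $\sigma=J\,\overline{(\cdot)}$ a genuine involution in bidegree $(2,0)$, so that the $\bar{J}$-decomposition is well-defined in the first place, while simultaneously providing the compatibility needed to transfer $\partial$- and $\partial_J$-closedness through that decomposition.
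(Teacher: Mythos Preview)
Your proof is correct and follows essentially the same strategy as the paper's: both arguments decompose a Bott--Chern representative into its $\bar J$-invariant and $\bar J$-anti-invariant parts for fullness, and for purity both use that $\partial\partial_J$ of a real function lands in $\Omega^{\bar J,+}$ (you phrase this via the identity $J\,\overline{\partial\partial_J f}=\partial\partial_J\bar f$ and then add/subtract, the paper splits $f=u+\sqrt{-1}v$ directly). The only difference is that you spell out the $J$-conjugation identities that the paper leaves implicit.
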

  \begin{proof}
  Let ${\mathfrak{a}}\in H^{\bar{J},+}_{BC}(M)\cap H^{\bar{J},+}_{BC}(M).$ Choose $\alpha\in \Omega^{\bar{J},+}(M) $ and form $\beta\in \Omega^{\bar{J},-}(M)$ both representatives of $\mathfrak{a}$. Then  $\alpha-\beta=\partial\partial_J\left(u+\sqrt{-1}v\right)$ for some real values functions $u,v$. Hence
 $ \alpha-\partial\partial_Ju=\beta+\sqrt{-1}\partial\partial_Jv$ and so $H^{\bar{J},+}_{BC}(M)\cap H^{\bar{J},+}_{BC}(M)$ is the zero class.
 Let $\alpha$ be a $\partial$-closed and $\partial_J$-closed representative of ${\mathfrak{a}}\in H^{2,0}_{BC}(M).$ We decompose $\alpha$ as $\alpha=\alpha^{\bar{J},+}+\alpha^{\bar{J},-}$ where $\alpha^{\bar{J},\pm}\in \Omega^{\bar{J},\pm}(M)$. Then, $\partial\alpha^{\bar{J},\pm}=\partial_J\alpha^{\bar{J},\pm}=0.$ The proposition follows.
  \end{proof}

Consider the natural map $H^{2,0}_{BC}(M)\to H^{2,0}_{\partial}(M)$, we have the following
\begin{lem}
On a compact hypercomplex manifold of any dimension the natural maps $H^{\bar{J},-}_{BC}(M)\to H^{\bar{J},-}_{\partial} (M)$ and
 $H^{\bar{J},+}_{BC}(M)\to H^{\bar{J},+}_{\partial}(M)$ are surjective.

\end{lem}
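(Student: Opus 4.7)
The plan is to observe that on the subspace $\Omega^{\bar J,\pm}(M)\subset A^{2,0}_I(M)$ the condition $\partial\alpha=0$ already forces $\partial_J\alpha=0$ automatically. Once this is in hand, any Dolbeault representative with the correct $J$-symmetry tautologically lifts to a Bott-Chern representative, and surjectivity is immediate.

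First I would pick a class $\mathfrak{a}\in H^{\bar J,\pm}_{\partial}(M)$ and, by the very definition of this subgroup, choose a representative $\alpha\in\mathfrak{a}$ with $\partial\alpha=0$ and $J\bar\alpha=\pm\alpha$. The key algebraic step is then to notice that $J$ is a real endomorphism of $TM$, so it commutes with complex conjugation on forms; applying conjugation to the identity $J\bar\alpha=\pm\alpha$ gives the companion identity $J\alpha=\pm\bar\alpha$.

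Next I would plug this into the definition $\partial_J=J^{-1}\bar\partial J$ and compute
$$\partial_J\alpha=J^{-1}\bar\partial(J\alpha)=\pm J^{-1}\bar\partial\bar\alpha=\pm J^{-1}\,\overline{\partial\alpha}=0,$$
using the standard identity $\bar\partial\bar\alpha=\overline{\partial\alpha}$ together with $\partial\alpha=0$. Hence $\alpha$ is simultaneously $\partial$-closed and $\partial_J$-closed while still lying in $\Omega^{\bar J,\pm}(M)$, so it determines a Bott-Chern class $[\alpha]_{BC}\in H^{\bar J,\pm}_{BC}(M)$ that manifestly maps to $\mathfrak{a}$ under the natural map $H^{\bar J,\pm}_{BC}(M)\to H^{\bar J,\pm}_{\partial}(M)$.

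There is no genuine obstacle here: the argument is uniform in the two signs, works in any dimension, and invokes neither compactness nor the $SL(n,\mathbb H)$ assumption. The entire content is the single observation that, restricted to the $J$-eigenspaces of $(2,0)$-forms, $\partial$-closedness is automatically upgraded to joint $\partial$- and $\partial_J$-closedness.
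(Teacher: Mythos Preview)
Your proof is correct and follows exactly the same approach as the paper: pick a representative $\alpha\in\Omega^{\bar J,\pm}(M)$ of the Dolbeault class and observe that the $J$-symmetry forces $\partial_J\alpha=0$, so $\alpha$ already represents a Bott--Chern class. The paper simply asserts the implication ``$\alpha\in\Omega^{\bar J,\pm}(M)$ and $\partial\alpha=0$ $\Rightarrow$ $\partial_J\alpha=0$'' without justification, whereas you supply the one-line computation $\partial_J\alpha=J^{-1}\bar\partial(J\alpha)=\pm J^{-1}\overline{\partial\alpha}=0$; your remark that compactness is not actually used is also accurate.
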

\begin{proof}
 To prove that both maps are surjective, we consider $\alpha\in\Omega^{\bar{J},\pm}(M)$ a representative of $\mathfrak{a}\in H^{\bar{J},\pm}_{\partial}(M)$.
  Because $\alpha\in\Omega^{\bar{J},\pm}(M)$, then $\partial\alpha=\partial_J\alpha=0$. The lemma follows.
\end{proof}
  On a compact $SL(2,\mathbb{H})$-manifold, we can push further these relations and prove the following
  \begin{lem}\label{maps}
  On a compact $SL(2,\mathbb{H})$-manifold, the natural map $H^{\bar{J},-}_{BC}(M)\mapsto H^{\bar{J},-}_{\partial}(M) $ is an isomorphism.
  \end{lem}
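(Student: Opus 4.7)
Since the previous lemma already provides surjectivity, the only content of the claim is that the natural map is injective. The plan is therefore to start with a class $\mathfrak{a}\in H^{\bar J,-}_{BC}(M)$ lying in the kernel of the map to $H^{\bar J,-}_{\partial}(M)$, choose a representative $\alpha\in\Omega^{\bar J,-}(M)$ with $\partial\alpha=\partial_J\alpha=0$, and exploit the fact that $\alpha=\partial\beta$ for some $\beta\in A^{1,0}(M)$ (this is what triviality in Dolbeault gives us). The goal is to upgrade this to $\alpha=0$, which in particular makes $\alpha$ trivial in Bott--Chern.

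The key ingredient is the identity $\ast\alpha=\alpha$ for every $\alpha\in\Omega^{\bar J,-}(M)$ on a compact $SL(2,\mathbb H)$-manifold, which was already invoked in the proof of Lemma \ref{operator}. Combined with the formula $\partial^\star=-\ast\partial\ast$ recalled immediately before that lemma, one immediately gets
\[
\partial^\star\alpha=-\ast\partial\ast\alpha=-\ast\partial\alpha=0.
\]
Thus $\alpha$ is simultaneously in $\ker\partial$ and in $\ker\partial^\star$, i.e.\ it is $\Delta_\partial$-harmonic. The classical $\partial$-Hodge decomposition of $A^{2,0}(M)$ then expresses every form as an $L^2$-orthogonal sum of a $\Delta_\partial$-harmonic form, a $\partial$-exact form, and a $\partial^\star$-exact form. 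Since $\alpha=\partial\beta$ lies both in the harmonic summand and in the $\partial$-exact summand of this orthogonal decomposition, it must vanish, and this forces $\mathfrak{a}=0$ in $H^{\bar J,-}_{BC}(M)$.

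I do not expect a serious obstacle here: the content of the argument is really the self-duality $\ast=\mathrm{id}$ on $\Omega^{\bar J,-}(M)$, a feature special to $SL(2,\mathbb H)$-manifolds in real dimension $8$, which collapses the difference between being $\partial$-closed and being both $\partial$- and $\partial^\star$-closed on imaginary $(2,0)$-forms. Once that is in hand, no quaternionic $\partial\partial_J$-Lemma argument is needed; the BC class is seen to agree with the Dolbeault class on the nose through a unique harmonic representative, and injectivity drops out of standard Hodge theory.
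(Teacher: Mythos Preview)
Your argument is correct and follows essentially the same route as the paper: both use the self-duality $\ast\alpha=\alpha$ of imaginary $(2,0)$-forms on an $SL(2,\mathbb H)$-manifold to show that a $\partial$-closed, $\partial$-exact representative $\alpha$ must vanish. The only cosmetic difference is that the paper writes out the norm computation $\|\alpha\|_h^2=\int_M\alpha\wedge\alpha\wedge\bar\Phi=\int_M\partial\beta\wedge\partial\beta\wedge\bar\Phi=0$ directly via Stokes, whereas you phrase the same orthogonality as ``$\Delta_\partial$-harmonic $\cap$ $\partial$-exact $=\{0\}$'' from the Hodge decomposition.
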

  \begin{proof}
We only need to prove that the map $H^{\bar{J},-}_{BC}(M)\mapsto H^{\bar{J},-}_{\partial}(M) $ is injective. Let $\alpha\in\Omega^{\bar{J},-}(M)$ be a representative of $\mathfrak{a}\in H^{\bar{J},-}_{BC}(M)$. We suppose
 that $\alpha=\partial \beta$, for some $(1,0)$-form $\beta$. Then,
 \begin{eqnarray*}
 \|\alpha\|^2_h&=&\int_M\alpha\wedge\ast\alpha\wedge\bar{\Phi},\\
 &=&\int_M\alpha\wedge\alpha\wedge\bar{\Phi},\\
&=&\int_M\partial\beta\wedge\partial\beta\wedge\bar{\Phi}=0.
  \end{eqnarray*}
  We deduce that $\alpha=0$ and hence the map is injective.
  \end{proof}

{On  $SL(2,\mathbb{H})$-manifolds we are then able to characterize the existence of HKT metrics in terms of  $\dim H^{\bar{J},+}_{BC}(M)$ and $h^{+}_{\bar{J}}$ as follows.}

  \begin{thm}\label{HKT-h+}
  On a compact $SL(2,\mathbb{H})$-manifold, either $\dim H^{\bar{J},+}_{BC}(M)=h^{+}_{\bar{J}}+1$ or $\dim H^{\bar{J},+}_{BC}(M)=h^{+}_{\bar{J}}.$ Moreover, the $SL(2,\mathbb{H})$-manifold is HKT if and only if $\dim H^{\bar{J},+}_{BC}(M)=h^{+}_{\bar{J}}.$
  \end{thm}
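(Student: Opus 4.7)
The plan is to analyze the natural surjective map $\pi\colon H^{\bar J,+}_{BC}(M)\twoheadrightarrow H^{\bar J,+}_\partial(M)$ from the preceding lemma, so that $\dim H^{\bar J,+}_{BC}(M)-h^{+}_{\bar J}=\dim\ker\pi$; the theorem then reduces to showing $\dim\ker\pi\in\{0,1\}$ with equality to zero iff the manifold is HKT. A class in $\ker\pi$ is represented by some $\alpha\in W:=\Omega^{\bar J,+}(M)\cap\partial A^{1,0}(M)$ (the condition $\partial_J\alpha=0$ is automatic for a real $\partial$-closed $(2,0)$-form, by the same identity $J\alpha=\bar\alpha$ used in the proof of Lemma~\ref{maps}), and two such representatives differ by $\partial\partial_J u$ with $u\in C^\infty(M,\mathbb{R})$, exactly as in the real/imaginary splitting of the potential in the proof of Theorem~\ref{thm:deldeljlemma-pure-full}. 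The reality $J\bar\alpha=\alpha$ for $\alpha=\partial\beta\in W$ additionally yields, via $\partial_J=J^{-1}\bar\partial J$ and $J^2=-1$ on $1$-forms, the companion identity $\alpha=-\partial_J(J\bar\beta)$, so every $\alpha\in W$ is simultaneously $\partial$-exact and $\partial_J$-exact.

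The HKT direction is then immediate: a compact HKT $SL(n,\mathbb{H})$-manifold satisfies the $\partial\partial_J$-Lemma on $A^{\bullet,0}_I(M)$ by~\cite{MR3709131}, so every $\alpha\in W$ (being $\partial$-closed and $\partial_J$-exact) is $\partial\partial_J$-exact, and the reality of $\alpha$ forces the potential to be real, giving $\ker\pi=0$. For the converse, assuming no HKT metric exists, fix a hyperhermitian $\Omega$. Because $\wedge\Omega\colon A^{1,0}(M)\to A^{3,0}(M)$ is a pointwise isomorphism in complex dimension $4$ (both bundles have rank $4$ and $\Omega$ is q-positive), there is a unique $(1,0)$-form $\theta$ with $\partial\Omega=\theta\wedge\Omega$, and $\theta\not\equiv0$ precisely because $\partial\Omega\neq0$. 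I would then show that $\alpha_0:=(\partial\theta)^{\bar J,+}\in W$ is not $\partial\partial_J$-exact by pairing against $\Omega\wedge\bar\Phi$: the Stokes identity
\[
\int_M\partial\partial_J u\wedge\Omega\wedge\bar\Phi=\int_M\partial_J u\wedge\partial\Omega\wedge\bar\Phi=\int_M\partial_J u\wedge\theta\wedge\Omega\wedge\bar\Phi
\]
(using $\partial\bar\Phi=0$) identifies the image of $\partial\partial_J(C^\infty(M,\mathbb{R}))$ under this pairing with a specific linear functional of $\theta$, which $\alpha_0$ escapes.

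For the upper bound $\dim\ker\pi\leq1$ I would adapt the positivity method of Lemma~\ref{maps}. For $\alpha_1,\alpha_2\in W$, writing $\alpha_i=\partial\beta_i$ and using $\partial\bar\Phi=0$, Stokes' theorem gives $\int_M\alpha_1\wedge\alpha_2\wedge\bar\Phi=0$. Decomposing $\alpha_i=\lambda_i\Omega+\alpha_i^0$ with $\alpha_i^0$ primitive ($\alpha_i^0\wedge\Omega=0$) and using the pointwise signature of $\ast$ on real $(2,0)$-forms ($\ast\Omega=\Omega$, $\ast\alpha^0=-\alpha^0$, as in the proof of Lemma~\ref{operator}), this vanishing rewrites as $\int_M\lambda_1\lambda_2\,\Omega^2\wedge\bar\Phi=\langle\alpha_1^0,\alpha_2^0\rangle_h$, and in particular forces $\alpha\mapsto\lambda$ to be injective on $W$. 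Combined with the $SL(2,\mathbb{H})$-trivialization $\Phi$ of the canonical bundle, which makes $\omega\mapsto\int_M\omega\wedge\bar\Phi$ detect a single extra dimension in $A^{4,0}/\partial A^{3,0}$ (since $\int_M\Phi\wedge\bar\Phi>0$ while $\int_M\partial\gamma\wedge\bar\Phi=0$), this should yield $\dim(W/\partial\partial_J(C^\infty(M,\mathbb{R})))\leq1$.

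The principal obstacle will be the last step: converting the pointwise injection $\alpha\mapsto\lambda$ and the one-dimensional cokernel of the pairing with $\bar\Phi$ into the sharp bound after quotienting by $\partial\partial_J(C^\infty(M,\mathbb{R}))$. This requires carefully matching the image of $\partial\partial_J$ on real functions with the codimension-one subspace of $W$ picked out by $\int_M\alpha\wedge\Omega\wedge\bar\Phi$, using the Aeppli/Bott--Chern duality of~\cite{MR3709131} in degree $(2,0)$ to pin down the single excess dimension coming from the obstruction to HKT.
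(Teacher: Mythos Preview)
Your route diverges substantially from the paper's. The paper does not analyze $\ker\pi$ directly at all: it invokes \cite[Theorem~9.8]{MR3698291}, which already gives $0\le h^{2,0}_{BC}-h^{2,0}_\partial\le 1$ with equality to $0$ characterizing HKT, and then combines this with the decomposition $H^{2,0}_{BC}(M)=H^{\bar J,+}_{BC}(M)\oplus H^{\bar J,-}_{BC}(M)$ (Proposition~\ref{BC-decomposition}), the equality $h^{2,0}_\partial=h^+_{\bar J}+h^-_{\bar J}$ from~\eqref{dim-equality}, and the isomorphism $H^{\bar J,-}_{BC}(M)\cong H^{\bar J,-}_\partial(M)$ of Lemma~\ref{maps}. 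Subtracting, the $\bar J$-anti-invariant contributions cancel and one is left with exactly the statement of the theorem. So the ``heavy lifting'' is entirely outsourced to \cite{MR3698291}; nothing like your positivity/Lee-form analysis is needed.

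Your direct approach is not wrong in spirit, but two steps are genuine gaps rather than routine. First, the class $\alpha_0:=(\partial\theta)^{\bar J,+}$ need not lie in $W=\Omega^{\bar J,+}(M)\cap\partial A^{1,0}(M)$: taking the $\bar J$-real part of a $\partial$-exact $(2,0)$-form does not in general produce a $\partial$-exact form (that would require $J\overline{\partial\theta}=\partial_J(J\bar\theta)$ to be $\partial$-exact, which is precisely a $\partial\partial_J$-Lemma--type statement you cannot assume in the non-HKT case). So you have not exhibited a nonzero element of $\ker\pi$ when no HKT metric exists. Second, for the upper bound, the injectivity of $\alpha\mapsto\lambda$ into $C^\infty(M,\bR)$ together with the one-dimensional cokernel of $\omega\mapsto\int_M\omega\wedge\bar\Phi$ on $A^{4,0}$ does not by itself yield $\dim\big(W/\partial\partial_J C^\infty(M,\bR)\big)\le1$: you still need that $u\mapsto\lambda(\partial\partial_J u)=h(\partial\partial_J u,\Omega)$ has image of codimension one in $C^\infty(M,\bR)$, i.e.\ an ellipticity/Fredholm statement for the quaternionic Laplacian with respect to a possibly non-HKT $\Omega$, which you have neither proved nor cited. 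You correctly flag this as the principal obstacle, but the proposal does not close it; the paper sidesteps it entirely by appealing to \cite[Theorem~9.8]{MR3698291}.
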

  \begin{proof}
  From~\cite[Theorem 9.8]{MR3698291}, we have that $$0\leqslant h^{2,0}_{BC}- h^{2,0}_{\partial}\leqslant 1.$$
  Moreover, the $SL(2,\mathbb{H})$-manifold is HKT if and only if  $h^{2,0}_{BC}=h^{2,0}_{\partial}.$
  It follows from~(\ref{dim-equality}), Proposition~(\ref{BC-decomposition}) and Lemma~(\ref{maps}) that
  $$0\leqslant \dim H^{\bar{J},+}_{BC}(M)-h^{+}_{\bar{J}}\leqslant 1,$$
  and that the $SL(2,\mathbb{H})$-manifold is HKT if and only if $\dim H^{\bar{J},+}_{BC}(M)=h^{+}_{\bar{J}}.$
  \end{proof}
  
We now compute explicitly  the spaces $H^{\bar J,\pm}_{\partial}(M)$ and $H^{\bar J,\pm}_{BC}(M)$ on a family of examples.

\begin{ex}\label{ex:deformation}
Let $t\in (0,1)$ and consider the following family of Lie algebras $\mathfrak{g}_t$ (see \cite{MR2101226}) with structure equations
$$
[e_1,e_2]=-t\,e_6,\quad [e_3,e_4]=(1-t)\,e_6,
$$
$$
[e_1,e_3]=-t\,e_7,\quad [e_2,e_4]=(t-1)\,e_7,
$$
$$
[e_1,e_4]=-t\,e_8,\quad [e_2,e_3]=(1-t)\,e_8,
$$
or equivalently
$$
de^1=de^2=de^3=de^4=de^5=0,
$$
$$
de^6=te^{12}-(1-t)e^{34},
\quad
de^7=te^{13}-(t-1)e^{24},
\quad
de^6=te^{14}-(1-t)e^{23}.
$$
Define the hypercomplex structure
$$
Ie_1=e_2,\quad Ie_3=e_4,\quad Ie_5=e_6,\quad Ie_7=e_8
$$
$$
Je_1=e_3,\quad Je_2=-e_4,\quad Je_5=e_7,\quad Je_6=-e_8.
$$
The Lie algebras $\mathfrak{g}_t$ are all isomorphic to $\mathfrak{n}_3$, see \cite{MR2101226}.
Setting
$$
\varphi^1=e^1+ie^2,\quad
\varphi^2=e^3+ie^4,\quad
\varphi^3=e^5+ie^6,\quad
\varphi^4=e^7+ie^8
$$
as global coframe of $(1,0)$ forms, the complex structure equations become
$$
\left\lbrace
\begin{array}{lcl}
d\varphi^1 & =& 0,\\
d\varphi^2 & = & 0,\\
d\varphi^3 & = &- \frac{t}{2}\varphi^{1\bar1}+\frac{1-t}{2}\varphi^{1\bar2},\\
d\varphi^4 & = &\left(t-\frac{1}{2}\right)\varphi^{12}-\frac{1}{2}\varphi^{2\bar1}.\\
\end{array}
\right.
$$
{We recall that a complex structure $J$ on a Lie algebra $\mathfrak{g}$ is said to be \emph{abelian} if $[Jx,Jy]=[x,y]$ for every $x,y\in\mathfrak{g}$, this is equivalent to $d\Lambda^{1,0}(\mathfrak{g}^*_{\mathbb{C}})\subseteq \Lambda^{1,1}(\mathfrak{g}^*_{\mathbb{C}})$. In \cite{MR2496748} it is proven that an hypercomplex nilmanifold admits an HKT metric if and only if the underlying hypercomplex structure is abelian.}

Notice that, in the example, the hypercomplex structure is abelian if and only if $t=\frac{1}{2}$ and so, by \cite{MR2496748}, there exists an HKT structure if and only if $t=\frac{1}{2}$.\\
 Also remark that the simply connected nilpotent Lie groups $G_t$ associated to $\mathfrak{g}_t$ admit lattices and we will still denote the hypercomplex structure $(I,J,K)$ with the same letters. Since the complex structure $I$ is nilpotent the Dolbeault cohomology groups on the associated nilmanifolds can be computed using only invariant forms.\\
In particular, notice that for $t\neq \frac{1}{2}$, we have
$$
H^{2,0}_{\partial}(M) \simeq\left\langle\varphi^{13},\varphi^{14},\varphi^{23},\varphi^{24}\right\rangle.
$$
Moreover,
$$
H^{\bar J,+}_{\partial}(M)\simeq\left\langle\varphi^{13}+\varphi^{24},\varphi^{14}-\varphi^{23}\right\rangle
\quad\text{and}\quad
H^{\bar J,-}_{\partial}(M)\simeq\left\langle\varphi^{13}-\varphi^{24},\varphi^{14}+\varphi^{23}\right\rangle.
$$
For $t=\frac{1}{2}$, the hypercomplex structure is abelian and we have
$$
H^{2,0}_{\partial}(M) \simeq\left\langle\varphi^{12},\varphi^{13},\varphi^{14},\varphi^{23},\varphi^{24},\varphi^{34}\right\rangle.
$$
Moreover,
$$
H^{\bar J,+}_{\partial}(M)\simeq\left\langle\varphi^{13}+\varphi^{24},\varphi^{14}-\varphi^{23},\varphi^{34},\varphi^{12}\right\rangle
\quad\text{and}\quad
H^{\bar J,-}_{\partial}(M)\simeq\left\langle\varphi^{13}-\varphi^{24},\varphi^{14}+\varphi^{23}\right\rangle.
$$

{Notice that for $t=\frac{1}{2}$, $h^{\bar J,+}_{\partial}(M)=4$ and for $t\neq \frac{1}{2}$, $h^{\bar J,+}_{\partial}(M)=2$ confirming, as mentioned in Remark \ref{rem:no-lower-semic} that in general, $h^{\bar J,+}_{\partial}(M)$ is not lower-semi-continuous.}

Similarly, we can compute the quaternionic Bott-Chern cohomology using invariant forms by ~\cite{MR3698291}.
Therefore, for $t\neq \frac{1}{2}$, we have
$$
H^{2,0}_{BC}(M) \simeq\left\langle\varphi^{12},\varphi^{13},\varphi^{14},\varphi^{23},\varphi^{24}\right\rangle.
$$
Moreover,
$$
H^{\bar J,+}_{BC}(M)\simeq\left\langle \varphi^{12},\varphi^{13}+\varphi^{24},\varphi^{14}-\varphi^{23}\right\rangle
\quad\text{and}\quad
H^{\bar J,-}_{BC}(M)\simeq\left\langle\varphi^{13}-\varphi^{24},\varphi^{14}+\varphi^{23}\right\rangle.
$$
In particular, as expected, notice that $\text{dim}\,H^{\bar J,+}_{BC}(M)=\text{dim}\,H^{\bar J,+}_{\partial}(M)+1$ and $\text{dim}\,H^{\bar J,-}_{BC}(M)=\text{dim}\,H^{\bar J,-}_{\partial}(M)$.\\
For $t=\frac{1}{2}$, the hypercomplex structure is abelian and so by \cite{MR2496748} there exists an HKT balanced metric. In such a case, by \cite{Gentili:2022uj}, we know that
$$
H^{2,0}_{BC}(M)\simeq H^{2,0}_{\partial}(M),
$$
and so we can use the previous computation.

\end{ex}

\begin{rem}
{
On $4$-dimensional manifolds admitting almost-complex structures it was proved that the
almost-complex structures $J$ with  $h_J^-= 0$ (where $h_J^-$ is the dimension of the subspace of $H^2_{dR}(M)$ with classes represented by $J$-anti-invariant forms) form an open dense set in the
$\mathcal{C}^\infty$-Fr\'echet-topology in the space of almost-complex structures metric related to an integrable one \cite[Theorem 1.1]{MR3032090}. Based on this, Draghici, Li and Zhang  made a conjecture (Conjecture 2.4 in \cite{MR3032090}) about $h_J^-$ on a compact 4-manifold which asserts that  $h_J^-$ vanishes for generic almost complex structures $J$. In particular, they have confirmed their conjecture for 4-manifolds with $b^+ = 1$.
}
We notice that if $(M,I,J,K,g)$ is a compact hyperk\"ahler manifold with $b_2=4$ then $h^{\bar J,-}_{\partial}=0$. Indeed, since $(M,I,g)$ is K\"ahler we have
$$
b_2=2h^{2,0}_{\partial}+h^{1,1}_{\partial}
$$
and since it satisfies the $\partial\partial_J$-lemma
$$
b_2=2h^{2,0}_{\partial}+h^{1,1}_{\partial}=2(h^{+}_{\bar{J}}+h^{-}_{\bar{J}})+h^{1,1}_{\partial}\,.
$$
Now, since the structure is hyperk\"ahler $h^{2,0}_{\partial}\geqslant 1$ ($\Omega$ represents a non-trivial class) and $h^{1,1}_{\partial}\geqslant 1$ ($\omega_I$ represents a non-trivial class).
Hence,
$$
b_2\geqslant 3+2h^{-}_{\bar{J}},
$$
so we obtain the conclusion if $b_2=4$. In particular, in this case $h^{2,0}_{\partial}=1$.
Notice that in the literature there are several results concerning the second Betti number of a hyperk\"ahler manifold, see for instance \cite{MR1879810,MR4498833}.\\
Clearly, this is a special case of having an $SL(n,\mathbb{H})$ HKT manifold $(M,I,J,K,\Phi,\Omega)$ with $h^{2,0}_{\partial}=1$. Indeed, in such a case
$$
h^{2,0}_{\partial}=h^{+}_{\bar{J}}+h^{-}_{\bar{J}}
$$
and $\Omega$ represents a non-trivial class in $H^{\bar J,+}_{\partial}$ and so $h^{-}_{\bar{J}}=0$ and $h^{+}_{\bar{J}}=1$.\\
Moreover, notice that if  $(M,I,J,K,\Phi,\Omega)$ is an $SL(2,\mathbb{H})$ HKT manifold with $h^{2,0}_{\partial}=1$ and we consider a small deformation $J_t$ of $J_0=J$, such that  $(M,I,J_t,K_t)$ is hypercomplex, then 
$$
h^{-}_{\bar{J}_t}=0
$$
since $h^{-}_{\bar{J}_t}$ is an upper-semi-continuous function of $t$, and
$$
h^{+}_{\bar{J}_t}=1
$$
since $h^{+}_{\bar{J}_t}$ is a lower-semi-continuous function of $t$ (because $I$ is fixed), and $H^{\bar J_t,+}_{\partial}(M)\subseteq H^{2,0}_{\partial}(M)$ whose dimension is $1$.

\end{rem}

\section{Hypercomplex eight-dimensional Nilpotent Lie groups}\label{nilpotent}

This section is devoted to show that $h^{+}_{\bar{J}}$ {alone} can be used to characterize the existence of HKT metrics on $8$-dimensionale nilpotent Lie groups.
We consider a nilpotent Lie algebra of real dimension $8$ equipped with a hypercomplex structure $(I,J,K)$. Then, it follows from~\cite[Proposition 3.1]{MR1997309} that we have the existence of four $d$-closed $1$-forms $e^1,e^2=Ie^1,e^3=Je^1,e^4=Ke^1.$ 

We can consider a basis of $1$-forms $\{e^1,e^2,e^3,e^4,e^5,e^6=Ie^5,e^7=Je^5,e^8=Ke^5\}$. The hypercomplex structure is given by:
$$Ie^1=e^2,\quad Ie^3=e^4,\quad Ie^5=e^6,\quad Ie^7=e^8. $$
$$Je^1=e^3,\quad Je^2=-e^4,\quad Je^5=e^7,\quad Je^6=-e^8. $$
A basis of $(1,0)$-forms is given by:
$$\varphi^1=e^1+\sqrt{-1}e^2,\quad \varphi^2=e^3+\sqrt{-1}e^4,\quad \varphi^3=e^5+\sqrt{-1}e^6,\quad \varphi^4=e^7+\sqrt{-1}e^8.$$
It follows from~~\cite{MR1997309} that
$$\partial \varphi^1=0,\quad \partial \varphi^2=0,\quad\partial \varphi^3=\frac{1}{2}\left(t_2-\sqrt{-1}t_3\right)\varphi^1\wedge\varphi^2,\quad
\partial \varphi^4=\frac{1}{2}\left(t_4+\sqrt{-1}t_1\right)\varphi^1\wedge\varphi^2,$$
for some constants $t_1,t_2,t_3,t_4.$
We consider the $(2,0)$-forms $\Phi_1,\Phi_2\in \Omega^{\bar{J},-}(M)$:
$$\Phi_1=\varphi_1\wedge\varphi_3-\varphi_2\wedge\varphi_4,\quad \Phi_1=\varphi_1\wedge\varphi_4+\varphi_2\wedge\varphi_3.$$
Then, we have that $\partial\Phi_1=\partial\Phi_2=0.$ It is clear that $\Phi_1,\Phi_2$ can not be $\partial$-exact. We conclude the following:
\begin{thm}
For any left-invariant hypercomplex structure $(I,J,K)$ on a nilpotent Lie group of real dimension $8$, $h^{-}_{\bar{J}}=2.$
\end{thm}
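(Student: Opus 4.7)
The plan is to show the upper bound $h^{-}_{\bar J}\leqslant 2$, since the two closed forms $\Phi_1,\Phi_2$ already give the lower bound $h^{-}_{\bar J}\geqslant 2$ once we verify they are not $\partial$-exact. Working at the invariant/Lie algebra level (and invoking, if necessary, the fact that for nilmanifolds with nilpotent complex structure the Dolbeault cohomology can be computed using left-invariant forms, as applied earlier in Example~\ref{ex:deformation}), the strategy is to determine the full invariant imaginary subspace of $\Lambda^{2,0}$ and show it is exactly $2$-dimensional.

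First I would record the action of $J$ on the dual coframe. From the given definitions $Je^1=e^3$, $Je^2=-e^4$, etc., one gets $J\bar\varphi^1=\varphi^2$, $J\bar\varphi^2=-\varphi^1$, $J\bar\varphi^3=\varphi^4$, $J\bar\varphi^4=-\varphi^3$. Since $J$ acts as an algebra map on forms, this immediately gives the action on the six-dimensional invariant space $\Lambda^{2,0}$: the pairings $(\varphi^{12},\varphi^{12})$ and $(\varphi^{34},\varphi^{34})$ are $+1$-eigenvectors of $J\bar{\,\cdot\,}$, while the $J\bar{\,\cdot\,}$-action permutes the remaining four basis vectors in two pairs $\{\varphi^{13},\varphi^{24}\}$ and $\{\varphi^{14},-\varphi^{23}\}$. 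Diagonalizing, the invariant real subspace $\Omega^{\bar J,+}$ is four-dimensional, spanned by $\varphi^{12},\varphi^{34},\varphi^{13}+\varphi^{24},\varphi^{14}-\varphi^{23}$, and the invariant imaginary subspace $\Omega^{\bar J,-}$ is exactly two-dimensional, spanned by $\Phi_1=\varphi^{13}-\varphi^{24}$ and $\Phi_2=\varphi^{14}+\varphi^{23}$.

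Next, using the structure equations $\partial\varphi^1=\partial\varphi^2=0$ and $\partial\varphi^3,\partial\varphi^4\in\mathrm{span}(\varphi^{12})$, a direct Leibniz computation gives $\partial\Phi_1=\partial\Phi_2=0$, as already noted in the paper. For $\partial$-exactness: any invariant $\partial$-exact $(2,0)$-form is $\partial\beta$ for some invariant $(1,0)$-form $\beta=\sum c_i\varphi^i$, and by the structure equations $\partial\beta$ lies in $\mathrm{span}(\varphi^{12})\subseteq \Omega^{\bar J,+}$. Since $\Omega^{\bar J,+}\cap\Omega^{\bar J,-}=\{0\}$, no non-zero element of $\Omega^{\bar J,-}$ is $\partial$-exact; in particular $[\Phi_1]$ and $[\Phi_2]$ are linearly independent non-zero classes, and they exhaust the two-dimensional invariant imaginary subspace, so $h^{-}_{\bar J}=2$.

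The only subtle point is the reduction to invariant forms: at the purely Lie-algebraic level (computing invariant cohomology) the argument above is complete, and for the associated nilmanifold it goes through because the complex structure $I$ constructed as in~\cite{MR1997309} is nilpotent in the sense that makes the invariant Dolbeault cohomology coincide with the full one, the same reduction already being used to compute $H^{2,0}_{\partial}(M)$ in the previous section. This is the step I expect to require the most care, but it is a standard application once the structure equations put us in the nilpotent-complex-structure regime; the genuinely new content is the clean dimensional count of $\Omega^{\bar J,-}$ provided by the $J$-action computation.
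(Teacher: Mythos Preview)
Your proof is correct and takes essentially the same approach as the paper---exhibiting $\Phi_1,\Phi_2$ as $\partial$-closed, non-$\partial$-exact imaginary $(2,0)$-forms---while being more explicit about the upper bound by verifying directly that the invariant space $\Omega^{\bar J,-}$ is exactly two-dimensional (the paper leaves this implicit). Your concern about the reduction to invariant forms is unnecessary here: the theorem is stated for a nilpotent Lie \emph{group} with a left-invariant structure, so $h^{-}_{\bar J}$ is computed at the Lie-algebra level by convention; the passage to invariant forms only becomes relevant for the subsequent Corollary~\ref{N-cor} on nilmanifolds.
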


Moreover we have the following
\begin{thm}\label{thm:hkt-number-nilmanifold}
Let $G$ be an $8$-dimensional real nilpotent Lie group with a left-invariant hyperhermitian structure $(I,J,K,g)$, then we have the following,
\begin{itemize}
\item $\Omega$ is HKT if and only if $h^{+}_{\bar{J}}=4$;
\item $\Omega$ is not HKT if and only if $h^{+}_{\bar{J}}=2$.
\end{itemize}
\end{thm}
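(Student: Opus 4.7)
My plan is a direct computation on the invariant (Chevalley--Eilenberg) subcomplex, building on the structure equations fixed in the previous theorem: $\partial\varphi^1=\partial\varphi^2=0$, and $\partial\varphi^3,\partial\varphi^4 \in \mathbb{C}\cdot\varphi^{12}$, with coefficients prescribed by the four real parameters $t_1,\ldots,t_4$.

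First I would identify how $J$ acts on the conjugate $(0,1)$-basis from the given action of $I,J,K$ on $\{e_i\}$: a short check gives $J\bar\varphi^1=\varphi^2$, $J\bar\varphi^2=-\varphi^1$, $J\bar\varphi^3=\varphi^4$, $J\bar\varphi^4=-\varphi^3$. Applying this to the six invariant $(2,0)$-forms $\varphi^{ij}$, the real subspace $\Omega^{\bar J,+}$ turns out to be four-dimensional, spanned by $\varphi^{12}$, $\varphi^{34}$, $\varphi^{13}+\varphi^{24}$, $\varphi^{14}-\varphi^{23}$, while $\Omega^{\bar J,-}$ is the two-dimensional imaginary subspace already identified in the previous theorem.

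Next I would compute $\partial$ on each generator. Since $\partial\varphi^1=\partial\varphi^2=0$ and $\partial\varphi^3,\partial\varphi^4$ are multiples of $\varphi^{12}$, the Leibniz rule forces every invariant $(2,0)$-form except $\varphi^{34}$ to be $\partial$-closed, while $\partial\varphi^{34}$ is a linear combination of $\varphi^{123}$ and $\varphi^{124}$ whose vanishing is equivalent to $t_1=t_2=t_3=t_4=0$. This produces a clean dichotomy: if all $t_i$ vanish, $\partial$ is identically zero on the invariant subcomplex, all four real generators survive as independent cohomology classes, and $h^+_{\bar J}=4$; if some $t_i\neq 0$, then $\partial(A^{1,0})=\mathbb{C}\cdot\varphi^{12}$ kills $[\varphi^{12}]$ and $\varphi^{34}$ drops out of $\ker\partial$, so only $[\varphi^{13}+\varphi^{24}]$ and $[\varphi^{14}-\varphi^{23}]$ remain and $h^+_{\bar J}=2$.

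To finish, I would tie this dichotomy to the HKT property by expanding an arbitrary invariant $\Omega$ in the real basis as $\Omega = a\varphi^{12}+b\varphi^{34}+c(\varphi^{13}+\varphi^{24})+d(\varphi^{14}-\varphi^{23})$. In the first case $\partial\Omega=0$ is automatic. In the second case I would use \eqref{eq:g-omega} to pin down the coefficient $b$: letting $Z_i$ be the $(1,0)$-basis dual to $\varphi^i$, one checks $J\bar Z_3 = Z_4$, and hence $b = \Omega(Z_3, Z_4) = \Omega(Z_3, J\bar Z_3) = g(Z_3, \bar Z_3)>0$ by positive-definiteness of $g$. Since $\partial\Omega = b\,\partial\varphi^{34}$, this forces $\partial\Omega\neq 0$ and rules out HKT. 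The main technical point is exactly this positivity argument: the cohomology bookkeeping is routine, but excluding the possibility of a metric $\Omega$ with $b=0$ (which could be $\partial$-closed even when $\varphi^{34}$ is not) requires the hyperhermitian identity $g(X,\bar Y)=\Omega(X,J\bar Y)$ applied in the direction $Z_3$ transverse to the $\varphi^{12}$-block.
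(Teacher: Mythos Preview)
Your proposal is correct and the cohomology computation is identical to the paper's: both identify the basis $\{\varphi^{12},\varphi^{34},\varphi^{13}+\varphi^{24},\varphi^{14}-\varphi^{23}\}$ of the invariant real $(2,0)$-forms, observe that the last two are always $\partial$-closed and non-exact, and that $\varphi^{12}$ becomes $\partial$-exact while $\varphi^{34}$ fails to be $\partial$-closed precisely when some $t_i\neq 0$.

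The one point of divergence is how you link the HKT condition to the vanishing of the $t_i$. The paper simply quotes the known fact (from \cite{MR2496748}) that on a hypercomplex nilmanifold HKT is equivalent to the hypercomplex structure being abelian, which in these coordinates reads $t_1=t_2=t_3=t_4=0$. You instead give a direct, self-contained argument: expanding an arbitrary invariant hyperhermitian $\Omega$ in the real basis and using the identity $g(Z_3,\bar Z_3)=\Omega(Z_3,J\bar Z_3)=\Omega(Z_3,Z_4)$ to force the $\varphi^{34}$-coefficient to be strictly positive, so that $\partial\Omega=b\,\partial\varphi^{34}\neq 0$ whenever some $t_i\neq 0$. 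This buys you independence from the external abelian characterization and actually proves the slightly sharper statement that \emph{no} invariant hyperhermitian metric can be HKT in the non-abelian case, at the cost of a short positivity computation that the paper's citation bypasses.
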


\begin{proof}
From \cite{MR1997309}, we consider the $(2,0)$-forms $\Psi_1,\Psi_2,\Psi_3,\Psi_4\in \Omega^{\bar{J},+}(G)$:
$$
\Psi_1=\varphi_1\wedge\varphi_2, \quad\Psi_2=\varphi_3\wedge\varphi_4,\quad
\Psi_3=\varphi_1\wedge\varphi_3+\varphi_2\wedge\varphi_4,\quad \Psi_4=\varphi_1\wedge\varphi_4-\varphi_2\wedge\varphi_3.
$$
Then, we have that $\partial\Psi_3=\partial\Psi_4=0,$ and it is clear that $\Psi_3,\Psi_4$ can not be $\partial$-exact.\\
Now, $\Omega$ is HKT if and only if the hypercomplex structure is abelian which is equivalent to $t_2-\sqrt{-1}t_3=t_4+\sqrt{-1}t_1=0$, namely
$$
t_1=t_2=t_3=t_4=0.
$$
In such a case, $\Psi_1$ and $\Psi_2$ are also $\partial$-closed and not $\partial$-exact and so $h^{+}_{\bar{J}}=4$.\\
If $\Omega$ is not HKT, then at least one among $t_1, t_2, t_3, t_4$ is not zero. Hence, $\Psi_1$ is $\partial$-exact and $\Psi_2$ is not $\partial$-closed, therefore $h^{+}_{\bar{J}}=2$.
\end{proof}

\begin{cor}\label{N-cor}
Let $N$ be an $8$-dimensional nilmanifold endowed with a left-invariant hypercomplex structure $(I,J,K)$, then we have the following,
\begin{itemize}
\item $N$ admits an HKT metric if and only if $h^{+}_{\bar{J}}=4$;
\item $N$ admits no HKT metrics if and only if $h^{+}_{\bar{J}}=2$.
\end{itemize}
\end{cor}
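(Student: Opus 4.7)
The plan is to reduce Corollary~\ref{N-cor} to Theorem~\ref{thm:hkt-number-nilmanifold} using two facts: (i) the result of Dotti--Fino~\cite{MR2496748} that an $8$-dimensional hypercomplex nilmanifold $N$ admits an HKT metric if and only if the left-invariant hypercomplex structure $(I,J,K)$ is abelian (and in that case $N$ admits a left-invariant HKT metric); and (ii) the fact that the Dolbeault cohomology of $(N,I)$ is computable from left-invariant forms, so that the Lie-algebra computation of $h^{+}_{\bar J}$ coincides with the value on $N$.

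For (ii), one uses that by~\cite[Proposition 3.1]{MR1997309} the complex structure $I$ on the nilpotent Lie algebra $\mathfrak{n}=\operatorname{Lie}(N)$ is nilpotent, so a Nomizu-type theorem (as already employed in Example~\ref{ex:deformation}) identifies $H^{\bullet,\bullet}_{\partial}(N,I)$ with the cohomology of the subcomplex of invariant forms. Consequently, the invariant $(2,0)$-forms $\Psi_1,\dots,\Psi_4\in\Omega^{\bar J,+}$ and $\Phi_1,\Phi_2\in\Omega^{\bar J,-}$ used in the proof of Theorem~\ref{thm:hkt-number-nilmanifold} (and in the preceding theorem that fixed $h^{-}_{\bar J}=2$) exhaust $H^{\bar J,\pm}_{\partial}(N)$, and the Lie-algebra dichotomy $h^{+}_{\bar J}\in\{2,4\}$ transfers verbatim to $N$.

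Putting these together: $N$ admits an HKT metric iff $(I,J,K)$ is abelian (by (i)) iff the structure constants satisfy $t_1=t_2=t_3=t_4=0$ (as identified in the proof of Theorem~\ref{thm:hkt-number-nilmanifold}) iff $\Psi_1=\varphi^{12}$ and $\Psi_2=\varphi^{34}$ are both $\partial$-closed and not $\partial$-exact iff $h^{+}_{\bar J}=4$. Since Theorem~\ref{thm:hkt-number-nilmanifold} shows that only the values $h^{+}_{\bar J}\in\{2,4\}$ can occur, the second bullet of the corollary is simply the contrapositive of the first. The only non-trivial step in this plan is the Nomizu-type identification needed to lift statements at the Lie-algebra level up to the nilmanifold $N$; this is standard under the nilpotency of $I$ but must be invoked explicitly, since without it the invariant cohomology computation underlying Theorem~\ref{thm:hkt-number-nilmanifold} would not a priori determine the full Dolbeault cohomology of $N$.
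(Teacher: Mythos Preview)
Your proof is correct and follows essentially the same reduction to Theorem~\ref{thm:hkt-number-nilmanifold} as the paper, which instead cites~\cite{MR2101226} (the averaging argument: if an HKT metric exists then an invariant one does) rather than the abelian characterization of~\cite{MR2496748} to pass from arbitrary HKT metrics on $N$ to the invariant setting. You also make explicit the Nomizu-type identification of invariant and nilmanifold Dolbeault cohomology (via the nilpotency of $I$) needed to interpret $h^{+}_{\bar J}$ on $N$, a point the paper's one-line proof leaves implicit.
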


\begin{proof}
The result follows from \cite{MR2101226}, indeed if there exists an HKT metric then there exists also an invariant HKT metric. The equivalence follows then from Theorem \ref{thm:hkt-number-nilmanifold}.
\end{proof}

\section{Hypercomplex almost-abelian Lie groups}\label{almost-abelian}

In this section, we discuss the case of hypercomplex almost-abelian Lie groups.
{More precisely, recall that a (solvable) real Lie algebra $\mathfrak{g}$ is {\em almost-abelian} if it has a codimension one abelian ideal. A Lie group $G$ is almost-abelian if its Lie algebra is.}
 Theorem~\ref{thm:almost-abelian}, Theorem~\ref{almost-abelian-hkt}, and Theorem~\ref{almost-abelian-hyp} were obtained by Barberis and Andrada in~\cite{Andrada:2022aa}. For the sake of completeness, we give proofs that we obtained independently of~\cite{Andrada:2022aa}.
First, we recall the following fact first proven in \cite{Lauret_2015}, 

\begin{thm}\label{thm:almost-abelian}
Let $\mathfrak{g}$ be a $2m$-dimensional almost-abelian Lie algebra with codimension one abelian ideal $\mathfrak{u}$ and let $I$ be an almost complex structure on $\mathfrak{g}$. 
Choose $X\in \mathfrak{g}\setminus \mathfrak{u}$ such that $IX\in \mathfrak{u}$ and set 
$\mathfrak{u}_I\coloneqq\mathfrak{u}\cap I\mathfrak{u}$ and 
$f\coloneqq\ad_X|_{\mathfrak{u}}\in \End(\mathfrak{u})$. 
Then $I$ is integrable if and only if there are 
$f_0\in \mathfrak{gl}(\mathfrak{u}_I,I)\coloneqq\left\{h\in \End(\mathfrak{u}_I)\mid [h,I]=0\right\}\cong \mathfrak{gl}(m-1,\bC)$, $w\in \mathfrak{u}_I\cong \bR^{2m-2}$ and $a\in \bR$ such that
\begin{equation*}
f=\begin{pmatrix}
      f_0 & w \\
      \mathbf 0 & a
   \end{pmatrix}
\end{equation*}
with respect to the splitting $\mathfrak{u}=\mathfrak{u}_I\oplus \left\langle IX\right\rangle$.
\end{thm}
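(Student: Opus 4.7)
The plan is to check integrability directly via the Nijenhuis tensor
\[
N_I(Y,Z) = [Y,Z] + I[IY,Z] + I[Y,IZ] - [IY,IZ].
\]
Since $\mathfrak{g} = \mathfrak{u} \oplus \langle X\rangle$ and $\mathfrak{u}$ is an abelian ideal, every bracket between elements of $\mathfrak{u}$ vanishes, and all remaining brackets reduce to expressions in $f = \ad_X|_{\mathfrak{u}}$. Using the splitting $\mathfrak{u} = \mathfrak{u}_I \oplus \langle IX\rangle$, I would evaluate $N_I$ on the generating pairs drawn from $\{\mathfrak{u}_I, IX, X\}$.

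The computation separates into four cases. For $Y, Z \in \mathfrak{u}_I$ one has $IY, IZ \in \mathfrak{u}_I \subset \mathfrak{u}$, so all four brackets vanish and $N_I(Y,Z) = 0$ for free; similarly $N_I(X, IX)$ collapses to $f(IX) - f(IX) = 0$. The substance lies in the mixed pairs: for $Y \in \mathfrak{u}_I$, using $[Y,IX] = [IY, IX] = 0$, $[Y,X] = -f(Y)$ and $[IY,X] = -f(IY)$, one obtains
\[
N_I(Y, IX) = I f(Y) - f(IY), \qquad N_I(X, Y) = f(Y) + I f(IY),
\]
and these two equations are equivalent (apply $I$). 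Hence $I$ is integrable if and only if $f \circ I = I \circ f$ on $\mathfrak{u}_I$.

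To translate this commutation into the matrix form, I would decompose $f(Y) = f_0(Y) + \lambda(Y)\,IX$ with $f_0(Y) \in \mathfrak{u}_I$ and $\lambda(Y) \in \bR$. Then $f(IY) = If(Y) = I f_0(Y) - \lambda(Y)\,X$ has a vanishing $X$-component (because the left-hand side lies in $\mathfrak{u}$), which forces $\lambda \equiv 0$. Equivalently $f(\mathfrak{u}_I) \subseteq \mathfrak{u}_I$, and the restriction satisfies $f_0 \in \mathfrak{gl}(\mathfrak{u}_I, I)$. Writing the remaining column as $f(IX) = w + a\cdot IX$ with $w \in \mathfrak{u}_I$, $a \in \bR$, recovers the block form of the statement. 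The converse is immediate: the block form gives back $f(\mathfrak{u}_I) \subseteq \mathfrak{u}_I$ and $[f_0, I] = 0$, so each of the four cases above vanishes.

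The calculation is entirely mechanical once the two splittings are fixed; no deep obstacle arises. The only conceptual content is recognizing that \emph{a priori} $\ad_X$ could mix $\mathfrak{u}_I$ with the direction $\langle IX\rangle$, and that integrability is precisely the condition that forbids this mixing, forcing the upper-triangular block structure.
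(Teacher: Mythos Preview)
Your proof is correct. The paper does not actually supply its own proof of this statement: it is quoted as a known fact from Lauret (2015) and then used as input for the hypercomplex version (the subsequent theorem, which carries the same label). Your Nijenhuis--tensor computation is exactly the method the paper employs there to handle the integrability of $J$, so your argument is in the same spirit as the paper's treatment of the hypercomplex case; you have simply filled in the step the authors chose to cite rather than reprove.

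One small remark on presentation: when you write ``$f\circ I = I\circ f$ on $\mathfrak{u}_I$'' as the integrability condition, note that a priori $f(Y)$ lies only in $\mathfrak{u}$, so $If(Y)$ need not lie in $\mathfrak{u}$ at all; it is precisely this that forces $\lambda\equiv 0$. You do extract this correctly in the next paragraph, but it would read more cleanly to say that integrability is equivalent to $f(Y)+If(IY)=0$ for $Y\in\mathfrak{u}_I$ (an equation in $\mathfrak{g}$), and then observe that projecting onto $\langle X\rangle$ kills $\lambda$ while the remaining $\mathfrak{u}_I$-component gives $[f_0,I]=0$.
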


Now we consider the hypercomplex case

 \begin{thm}\label{thm:almost-abelian}\cite{Andrada:2022aa}
Let $\mathfrak{g}$ be a $4n$-dimensional almost-abelian Lie algebra with codimension one abelian ideal $\mathfrak{u}$ and let $I,J$ be two anti-commuting almost complex structures on $\mathfrak{g}$. 
Choose $X\in \mathfrak{g}\setminus \mathfrak{u}$ such that $IX\in \mathfrak{u}$ and set 
$\mathfrak{u}_I\coloneqq\mathfrak{u}\cap I\mathfrak{u}$, $\mathfrak{u}_{I,J}\coloneqq\mathfrak{u}\cap I\mathfrak{u}\cap J\mathfrak{u}$ and 
$f\coloneqq\ad_X|_{\mathfrak{u}}\in \End(\mathfrak{u})$. \\
Then $I, J$ are integrable if and only if there are 
$\tilde f:=f|_{\mfu_{I,J}}\in \mathrm{End}(\mfu_{I,J})$ satisfying $[\tilde f,I]=0$ and $[\tilde f,J]=0$, $v\in \mathfrak{u}_{I,J}\cong \bR^{4n-4}$ and $a\in \bR$ such that
\begin{equation*}
f=\begin{pmatrix}
      \tilde f & -Jv& Kv & v \\
      \mathbf 0 & a & 0&0\\
      \mathbf 0 & 0 &a &0\\
      \mathbf 0 & 0 &0 &a
   \end{pmatrix}\,
\end{equation*}
with respect to the splitting
$\mfu= \mfu_{I,J}\oplus \left\langle KX\right\rangle\oplus \left\langle JX\right\rangle\oplus \left\langle IX\right\rangle$, where $K:=IJ$.
\end{thm}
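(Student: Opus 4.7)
My strategy is to apply the preceding single almost-complex-structure theorem three times — once to each of $I$, $J$, and $K=IJ$ — with a single transverse vector $X$ adapted to all three, and then to assemble the resulting block-triangular constraints into the quaternionic pattern claimed. Concretely, I would first choose $X\in\mfg\setminus\mfu$ with $IX$, $JX$, and $KX$ all in $\mfu$; this is possible because, equipping $\mfg$ with a hyperhermitian inner product (obtainable by averaging any Euclidean one over the quaternion group $\{\pm 1,\pm I,\pm J,\pm K\}$) and letting $e$ span $\mfu^\perp$, one has $(L\mfu)^\perp=\langle Le\rangle$ for $L=I,J,K$, so $I\mfu\cap J\mfu\cap K\mfu=\langle Ie,Je,Ke\rangle^\perp$ has codimension $3$ in $\mfg$ while $\mfu\cap I\mfu\cap J\mfu\cap K\mfu$ has codimension $4$, leaving room. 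With such an $X$ fixed I set $V:=\mfu\cap I\mfu\cap J\mfu\cap K\mfu$; this plays the role of $\mfu_{I,J}$ in the splitting of the statement, since $\dim V=4n-4$. I would record the auxiliary decompositions $\mfu=V\oplus\langle IX,JX,KX\rangle$ and $\mfu_L=V\oplus\langle L'X,L''X\rangle$ for $\{L,L',L''\}=\{I,J,K\}$ (e.g.\ $JX\in\mfu_I$ because $JX=I(-KX)$ with $KX\in\mfu$).

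For the forward direction, the single-structure theorem applied to $I$ yields $f(\mfu_I)\subseteq\mfu_I$, $f(IX)=w_I+a_I\,IX$ with $w_I\in\mfu_I$, and $[f|_{\mfu_I},I]=0$; analogously with $w_J,a_J$ for $J$ and $w_K,a_K$ for $K$. Intersecting the three invariance conclusions applied to $JX$ gives $f(JX)-a_J\,JX\in\mfu_I\cap\mfu_J\cap\mfu_K=V$, so $f(JX)=v_J+a_J\,JX$ with $v_J\in V$; analogously $f(IX)=v_I+a_I\,IX$ and $f(KX)=v_K+a_K\,KX$ with $v_I,v_K\in V$. Applying $I$-commutation on $\mfu_I$ to $JX$, using $I(JX)=KX$, forces $v_K=Iv_J$ and $a_K=a_J$ (since $v_K-Iv_J\in V$ while $KX\notin V$); applying $J$-commutation on $\mfu_J$ to $IX$, using $J(IX)=-KX$, forces $v_K=-Jv_I$ and $a_K=a_I$. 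Setting $v:=v_I$ and $a:=a_I=a_J=a_K$, these give $v_J=Kv$ and $v_K=-Jv$, which is exactly the matrix in the statement; $[\tilde f,I]=[\tilde f,J]=0$ then follows by restricting $[f|_{\mfu_I},I]=0$ and $[f|_{\mfu_J},J]=0$ to $V\subseteq\mfu_I\cap\mfu_J$.

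For the converse, given $f$ in the stated form I would directly verify the hypotheses of the single-structure theorem for $I$: $f$ preserves $\mfu_I=V\oplus\langle JX,KX\rangle$ by inspection of the matrix, and $[f|_{\mfu_I},I]=0$ holds because on $V$ this is the assumption on $\tilde f$, while on $\langle JX,KX\rangle$ one computes $If(JX)=I(Kv+a\,JX)=-Jv+a\,KX=f(KX)=f(I\cdot JX)$ using $IK=-J$ and $IJ=K$, and symmetrically for $KX$; this gives integrability of $I$, and the analogous check with $JK=I$ and $JI=-K$ gives integrability of $J$. The main obstacle in the proof is the cross-structure bookkeeping in the forward direction: combining the three applications of the single-structure theorem to force the off-diagonal scalars on $\langle IX,JX,KX\rangle$ to vanish and the three diagonal scalars to coincide, and then extracting the precise quaternionic identity $(v_I,v_J,v_K)=(v,Kv,-Jv)$ from the two commutation relations, is where the signs $JI=-IJ$ and $IJ=K$ must be tracked carefully.
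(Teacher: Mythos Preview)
Your argument is essentially correct and close in spirit to the paper's, but it differs in one structural respect and relies on one fact you do not state.

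The paper applies the single-structure theorem only to $I$, then analyzes the Nijenhuis tensor of $J$ directly to obtain $f(JY)=Jf(Y)$ for $Y\in\mfu_J$. Writing $f(IX)=aIX+bJX+cKX+v$, it computes $f(KX)=-Jf(IX)$ and $f(JX)=-If(KX)$ and reads off $b=c=0$ from the requirement that these values lie in $\mfu$ (i.e.\ have no $X$-component). Integrability of $K$ is never invoked.

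Your forward direction instead applies the single-structure theorem to $K$ as well: the step ``intersecting the three invariance conclusions'' uses $f(\mfu_K)\subseteq\mfu_K$ to force $w_L\in\mfu_I\cap\mfu_J\cap\mfu_K=V$. That presupposes $K=IJ$ is integrable, which is not among the hypotheses. It is true that for anticommuting almost complex structures $N_I=N_J=0$ forces $N_K=0$, but you must state and justify this; otherwise the argument has a gap. Alternatively you can bypass $K$ entirely: with only $I$- and $J$-data one gets $w_I,w_J\in\mfu_I\cap\mfu_J=V\oplus\langle KX\rangle$, and comparing the two expressions $f(KX)=If(JX)=-Jf(IX)$ kills the $\langle KX\rangle$-components and equates $a_I=a_J$ --- exactly the paper's mechanism in your notation. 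Once $K$-integrability is granted, your symmetric three-fold approach is a bit cleaner; the paper's approach is more self-contained.

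One further remark: as literally defined in the statement, $\mfu_{I,J}=\mfu\cap I\mfu\cap J\mfu$ has dimension $4n-3$ and contains $KX$, so the displayed splitting cannot be a direct sum; the intended space is your $V=\mfu\cap I\mfu\cap J\mfu\cap K\mfu$, and you were right to work with it. Your explicit construction of $X$ via a hyperhermitian inner product (so that $IX,JX,KX\in\mfu$ simultaneously) is also a point the paper leaves implicit.
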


\begin{proof}
Set $m=2n$. From the previous result $I$ is integrable if and only if there are \\
$f_0\in \mathfrak{gl}(\mathfrak{u}_I,I)\coloneqq\left\{h\in \End(\mathfrak{u}_I)\mid [h,I]=0\right\}\cong \mathfrak{gl}(m-1,\bC)$, $w\in \mathfrak{u}_I\cong \bR^{2m-2}$ and $a\in \bR$ such that
\begin{equation*}
f=\begin{pmatrix}
      f_0 & w \\
      \mathbf 0 & a
   \end{pmatrix}
\end{equation*}
with respect to the splitting $\mathfrak{u}=\mathfrak{u}_I\oplus \left\langle IX\right\rangle$.\\
We study now the integrability of $J$, hence
we need to check when the Nijenhuis tensor
\begin{equation*}
N_J(Y,Z)=[Y,Z]+J([JY,Z]+[Y,JZ])-[JY,JZ]
\end{equation*}
vanishes. \\
First of all, notice that if $N_J(Y,Z)=0$ for some $Y,Z\in \mfg$ then $N_J(JY,Z)=N_J(Y,JZ)=N_J(JY,JZ)=0$.
Moreover, $N_J(Y,JY)=0$ for all $Y\in \mfg$. \\
Since $\mfu$ is abelian,
$N_J(Y,Z)=0$ for $Y,Z\in \mfu_J:=\mfu\cap J\mfu$.\\
According to the splitting,
$$
\mfg=\left\langle X\right\rangle\oplus \mfu_{J}\oplus \left\langle JX\right\rangle
$$
we only need to check $N_J(X,Y)=0$ for $Y\in \mfu_{J}$, Let $Y\in \mfu_{J}$, 
\begin{equation*}
N_J(X,Y)=[X,Y]+J([JX,Y]+[X,JY])-[JX,JY]=[X,Y]+J[X,JY]=f(Y)+Jf(JY),
\end{equation*}
hence
$$
N_J(X,Y)=0 \iff Jf(Y)=f(JY).
$$
In particular, we have that, since $IX\in \mfu_J$,
\begin{itemize}
\item $Jf(Y)=f(JY)$ for $Y\in \mfu_{I,J}$,
\item $Jf(IX)=-f(KX)$,
\end{itemize}
Similarly, since $I$ is integrable and $JX\in\mfu_I$, by $If(Y)=f(IY)$ for every $Y\in\mfu_I$ we obtain
$$
If(JX)=f(KX).
$$
Using the previous notations we have
$$
f(IX)=aIX+w
$$
therefore, there exist $b,c\in\mathbb{R}$ and $v\in\mfu_{I,J}$ such that
$$
f(IX)=aIX+bJX+cKX+v.
$$
By the previous considerations we have
\begin{itemize}
\item $f(KX)=-Jf(IX)$,
\item $f(JX)=-If(KX)$,
\end{itemize}
hence, from the first equation we obtain,
$$
f(KX)=-Jf(IX)=bX-cIX+aKX-Jv
$$
but $f\in\End(\mathfrak{u})$ and so $b=0$.
Similarly, from the second equation
$$
f(JX)=-If(KX)=-cX+aJX+Kv
$$
but $f\in\End(\mathfrak{u})$ and so $c=0$.
Therefore, we have
$$
f(IX)=aIX+v,
$$
$$
f(JX)=aJX+Kv,
$$
$$
f(KX)=aKX-Jv.
$$
Therefore,
$I$ and $J$ are integrable if and only if there are 
$\tilde f:=f|_{\mfu_{I,J}}\in \mathrm{End}(\mfu_{I,J})$ satisfying $[\tilde f,I]=0$ and $[\tilde f,J]=0$, $v\in \mathfrak{u}_{I,J}\cong \bR^{4n-4}$ and $a\in \bR$ such that
\begin{equation*}
f=\begin{pmatrix}
      \tilde f & -Jv& Kv & v \\
      \mathbf 0 & a & 0&0\\
      \mathbf 0 & 0 &a &0\\
      \mathbf 0 & 0 &0 &a
   \end{pmatrix}\,
\end{equation*}
with respect to the splitting
$\mfu= \mfu_{I,J}\oplus \left\langle KX\right\rangle\oplus \left\langle JX\right\rangle\oplus \left\langle IX\right\rangle$.

\end{proof}

\begin{rem}
Notice that for $n=1$, we obtain
\begin{equation*}
f=\begin{pmatrix}
a & 0&0\\
0 &a &0\\
0 &0 &a
   \end{pmatrix}\,.
\end{equation*}
In particular, $\mathfrak{g}$ is unimodular if and only if $a=0$, in such a case $\mfg=\R^3\rtimes_f \R$ is isomorphic to $\R^4$ and the associated solvmanifold is the $4$-dimensional torus, that according to the classification of $4$-dimensional hypercomplex manifold in \cite{MR915736} is the only $4$-dimensional hypercomplex solvmanifold.
\end{rem}

We study now the existence of hyperk\"ahler and HKT metrics on hypercomplex almost-abelian Lie algebras. Let $\mfg$ be a $4n$-dimensional almost-abelian Lie algebra endowed with an hypercomplex structure $(I,J,K)$ and an hyperhermitian metric $g$. Then, there exists an orthonormal basis $\left\lbrace e_i\right\rbrace$ such that $\mfu_{I,J}$ is spanned by $e_2,\ldots,e_{2n-3}$, $Ie_1=e_{2n}$, $Je_1=e_{2n-1}$, $Ke_1=-e_{2n-2}$.
In view of Theorem \ref{thm:almost-abelian}, there exist a $(4n-4)\times (4n-4)$-matrix
$\tilde A:=f|_{\mfu_{I,J}}$ satisfying $[\tilde A,I]=0$ and $[\tilde A,J]=0$, $v\in  \bR^{4n-4}$ and $a\in \bR$ such that
\begin{equation*}
f=\begin{pmatrix}
      \tilde A& -Jv& Kv & v \\
      \mathbf 0 & a & 0&0\\
      \mathbf 0 & 0 &a &0\\
      \mathbf 0 & 0 &0 &a
   \end{pmatrix}\,
\end{equation*}
with respect to the splitting
$\mfu= \mfu_{I,J}\oplus \left\langle Ke_1\right\rangle\oplus \left\langle Je_1\right\rangle\oplus \left\langle Ie_1\right\rangle$.\\
We study now the existence of an HKT metric in terms of $\tilde A,a,v$.
\begin{thm}\cite{Andrada:2022aa}\label{almost-abelian-hkt}
$(I,J,K,g)$ is HKT if and only if $\tilde A\in \mathfrak{so}(\mfu_{I,J})$ and $v=0$.
\end{thm}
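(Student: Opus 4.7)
The plan is to translate $\partial\Omega = 0$ into an algebraic condition on $f = \ad_X|_\mfu$ and then match it with the block decomposition of Theorem~\ref{thm:almost-abelian}. Since $\Omega \in A^{2,0}_I$ is left-invariant, $d\Omega$ lies in $A^{3,0}_I \oplus A^{2,1}_I$, so the HKT condition amounts to $d\Omega(U,V,W) = 0$ for every triple of $(1,0)$-vectors $U, V, W$. Using the Koszul formula for left-invariant forms, together with the fact that the only non-trivial bracket in $\mfg$ is $\ad_X = f$, every triple that avoids $X^{1,0} := \tfrac{1}{2}(X - \sqrt{-1}\,IX)$ contributes trivially by abelianness of $\mfu$. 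Since a complex basis of the $(1,0)$-subspace is $\{X^{1,0}\} \cup \{Y^{1,0} : Y \in \mfu_I\}$ with $Y^{1,0} = \tfrac{1}{2}(Y - \sqrt{-1}\,IY)$, one only has to control $d\Omega(X^{1,0}, Y^{1,0}, Z^{1,0})$ for $Y, Z \in \mfu_I$.

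Expanding $[X^{1,0}, Y^{1,0}]$ and using the integrability identity $f \circ I = I \circ f$ on $\mfu_I$ coming from the block form of $f$, one obtains $[X^{1,0}, Y^{1,0}] = \tfrac{1}{2}(fY)^{1,0}$. Since $\Omega$ is of type $(2,0)$, one also has $\Omega(Y^{1,0}, Z^{1,0}) = \Omega(Y,Z) = \tfrac{1}{2}(g(JY,Z) + \sqrt{-1}\,g(KY,Z))$ on real $Y, Z$. Substituting into the Koszul expansion reduces the HKT condition to the pair of bilinear identities
\begin{equation*}
\omega_J(fY,Z) + \omega_J(Y,fZ) = 0, \qquad \omega_K(fY,Z) + \omega_K(Y,fZ) = 0,
\end{equation*}
for all $Y, Z \in \mfu_I$.

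Finally, I would analyze these on the splitting $\mfu_I = \mfu_{I,J} \oplus \langle JX, KX \rangle$. For $Y, Z \in \mfu_{I,J}$, using $[\tilde f, J] = [\tilde f, K] = 0$, both identities collapse to $g(\tilde fY, Z) + g(Y, \tilde f Z) = 0$, i.e.\ $\tilde f \in \mathfrak{so}(\mfu_{I,J})$. Taking instead $Y = JX$ (or symmetrically $Y = KX$) and $Z \in \mfu_{I,J}$, and plugging in the block values $f(JX) = aJX + Kv$ and $f(Z) = \tilde f Z \in \mfu_{I,J}$, the orthogonality of $X, IX, JX, KX$ to $\mfu_{I,J}$ kills every term except those proportional to $g(v,Z)$ and $g(Iv,Z)$; these must vanish for all $Z \in \mfu_{I,J}$, forcing $v = 0$ since $v \in \mfu_{I,J}$. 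The subcases $Y, Z \in \{JX, KX\}$ vanish by the same orthogonality, and the parameter $a$ never appears, as expected. The converse follows by reversing the computation. The main obstacle is the clean isolation of the $(3,0)$-component of $d\Omega$: a priori $[X^{1,0}, V]$ has a non-trivial $(0,1)$-part for $V$ outside $\mfu_I^{1,0}$, which is why the reduction to the distinguished basis $\{X^{1,0}\} \cup \mfu_I^{1,0}$ is essential; once this is set up, the case split matches exactly the block structure of $f$.
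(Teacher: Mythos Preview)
Your argument is correct and takes a genuinely different route from the paper's proof. The paper works with the characterization $d^{c,I}\omega_I = d^{c,J}\omega_J = d^{c,K}\omega_K$ and invokes an explicit formula (from the almost-abelian literature) for the Bismut torsion $d^{c,L}\omega_L(x,y,z) = -g([Lx,Ly],z)-g([Ly,Lz],x)-g([Lz,Lx],y)$, then runs through a long case-by-case evaluation on the orthonormal basis $\{e_1,\dots,e_{4n}\}$ comparing $d^{c,I}\omega_I$ and $d^{c,J}\omega_J$ on each triple. You instead test $\partial\Omega=0$ directly on $(1,0)$-vectors, use the single bracket identity $[X^{1,0},Y^{1,0}]=\tfrac12(fY)^{1,0}$ for $Y\in\mfu_I$ (which is exactly where the integrability of $I$ enters), and obtain the compact pair of conditions $\omega_J(fY,Z)+\omega_J(Y,fZ)=0$ and $\omega_K(fY,Z)+\omega_K(Y,fZ)=0$ on $\mfu_I$. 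Your subsequent case split along $\mfu_I=\mfu_{I,J}\oplus\langle JX,KX\rangle$ is then short: the diagonal block gives $\tilde A+\tilde A^{t}=0$ via $[\tilde A,J]=0$, and the mixed terms isolate $g(v,Z)$ and $g(Iv,Z)$, forcing $v=0$. The paper's approach has the advantage of plugging into an existing formula and making the symmetry between $I,J,K$ manifest; your approach is more intrinsic to the $(2,0)$-form $\Omega$, avoids the auxiliary torsion-form machinery, and yields a cleaner bilinear criterion that could be reused for related questions. One small point worth stating explicitly in your write-up: the identification $\mfu_I=\mfu_{I,J}\oplus\langle JX,KX\rangle$ uses that $JX,KX\in\mfu$, which follows from choosing $X$ $g$-orthogonal to $\mfu$ (so $IX,JX,KX\perp X$), a choice available precisely because a hyperhermitian metric is part of the data.
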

\begin{proof}
We recall that $(I,J,K,g)$ is HKT if and only if
$$
d^{c,I}\omega_I=d^{c,J}\omega_J=d^{c,K}\omega_K.
$$
In particular, by \cite{MR3957836} for any $L\in\left\lbrace I,J,K\right\rbrace$, the torsion form of the Bismut connection on an almost abelian Lie algebra is given by
$$
d^{c,L}\omega_L(x,y,z)=-g([Lx,Ly],z)-g([Ly,Lz],x)-g([Lz,Lx],y),
$$
for every $x,y,z\in\mfg$.\\
Since $K=IJ$ it is enough to study when
$$
d^{c,I}\omega_I=d^{c,J}\omega_J.
$$
First of all, if $x,y,z\in\mfu_{I,J}$ then, since $\mfu$ is abelian and $\mfu_{I,J}$ is $I$- and $J$-invariant,
$$
d^{c,L}\omega_L(x,y,z)=0,
$$
for every $L\in\left\lbrace I,J\right\rbrace$.
Similarly,
\begin{itemize}
\item $d^{c,L}\omega_L(e_1,e_{2n-2},e_{2n-1})=0$,
\item $d^{c,L}\omega_L(e_1,e_{2n-2},e_{2n})=0$,
\item $d^{c,L}\omega_L(e_1,e_{2n-1},e_{2n})=0$,
\item $d^{c,L}\omega_L(e_{2n-2},e_{2n-1},e_{2n})=0$,
\item $d^{c,L}\omega_L(e_{1},y,z)=0$,
\item $d^{c,L}\omega_L(e_{2n-2},y,z)=0$,
\item $d^{c,L}\omega_L(e_1,e_{2n-2},z)=0$,
\end{itemize}
for every $L\in\left\lbrace I,J,K\right\rbrace$ and $y,z\in\mfu_{I,J}$.\\
We will compute the first one, the others are similar.
Notice that
$$
Ie_{2n-2}=-IKe_1=Je_1=e_{2n-1},\quad
Je_{2n-2}=-JKe_1=-Ie_1=-e_{2n}.
$$
Now,
$$
\begin{aligned}
d^{c,I}\omega_I(e_1,e_{2n-2},e_{2n-1})& =
-g([Ie_1,Ie_{2n-2}],e_{2n-1})-g([Ie_{2n-2},Ie_{2n-1}],e_1)-g([Ie_{2n-1},Ie_1],e_{2n-2}),\\
&= -g([e_{2n},e_{2n-1}],e_{2n-1})+g([e_{2n-1},e_{2n-2}],e_1)+g([e_{2n-2},e_{2n}],e_{2n-2}),\\
&=0,
\end{aligned}
$$
since $\mfu$ is abelian.
On the other side,
$$
\begin{aligned}
d^{c,J}\omega_J(e_1,e_{2n-2},e_{2n-1})& =
-g([Je_1,Je_{2n-2}],e_{2n-1})-g([Je_{2n-2},Je_{2n-1}],e_1)-g([Je_{2n-1},Je_1],e_{2n-2}),\\
&= g([e_{2n-1},e_{2n}],e_{2n-1})+g([e_{2n},e_{1}],e_1)-g([e_{1},e_{2n-1}],e_{2n-2}),\\
&=g([e_{2n},e_{1}],e_1)-g([e_{1},e_{2n-1}],e_{2n-2}),
\end{aligned}
$$
since $\mfu$ is abelian. Moreover, $g([e_{2n},e_{1}],e_1)=0$ since $e_1$ is orthogonal to $\mfu$, and $g([e_{1},e_{2n-1}],e_{2n-2})=0$ since
$$
[e_{1},e_{2n-1}]=ae_{2n-1}+Kv
$$
and the basis is orthogonal. Therefore, $d^{c,J}\omega_J(e_1,e_{2n-2},e_{2n-1})=0$.\\
The remaining cases give the following conditions, for every $y,z\in\mfu_{I,J}$,
\begin{itemize}
\item $d^{c,I}\omega_I(e_{2n-1},y,z)=d^{c,J}\omega_J(e_{2n-1},y,z)$ if and only if
$g(AJy,z)=g(AJz,y)$,
\item $d^{c,I}\omega_I(e_{2n},y,z)=d^{c,J}\omega_J(e_{2n},y,z)$ if and only if
$g(AIy,z)=g(AIz,y)$,
\item $d^{c,I}\omega_I(e_1,e_{2n-1},z)=d^{c,J}\omega_J(e_1,e_{2n-1},z)$ if and only if
$g(Kv,z)=0$,
\item $d^{c,I}\omega_I(e_1,e_{2n},z)=d^{c,J}\omega_J(e_1,e_{2n},z)$ if and only if
$g(v,z)=0$,
\item $d^{c,I}\omega_I(e_{2n-2},e_{2n-1},z)=d^{c,J}\omega_J(e_{2n-2},e_{2n-1},z)$ if and only if
$g(v,z)=0$,
\item $d^{c,I}\omega_I(e_{2n-2},e_{2n},z)=d^{c,J}\omega_J(e_{2n-2},e_{2n},z)$ if and only if
$g(Kv,z)=0$,
\item $d^{c,I}\omega_I(e_{2n-1},e_{2n},z)=d^{c,J}\omega_J(e_{2n-1},e_{2n},z)$ is always satisfied.
\end{itemize}

We will show explicitly that $d^{c,I}\omega_I(e_{2n-1},y,z)=d^{c,J}\omega_J(e_{2n-1},y,z)$ if and only if
$g(AJy,z)=g(AJz,y)$. First of all,
$$
\begin{aligned}
d^{c,I}\omega_I(e_{2n-1},y,z)& =-g([Ie_{2n-1},Iy],z)-g([Iy,Iz],e_{2n-1})-g([Iz,Ie_{2n-1}],y),\\
&=g([e_{2n-2},Iy],z)+g([Iz,e_{2n-2}],y),\\
&=0
\end{aligned}
$$
since $\mfu$ is abelian. On the other side,,
$$
\begin{aligned}
d^{c,J}\omega_J(e_{2n-1},y,z)& =-g([Je_{2n-1},Jy],z)-g([Jy,Jz],e_{2n-1})-g([Jz,Je_{2n-1}],y),\\
&=g([e_{1},Jy],z)+g([Jz,e_{1}],y),\\
&=g(AJy,z)+g(-AJz,y).
\end{aligned}
$$
Hence, $d^{c,I}\omega_I(e_{2n-1},y,z)=d^{c,J}\omega_J(e_{2n-1},y,z)$ if and only if
$g(AJy,z)=g(AJz,y)$.\\
Similarly, we show that $d^{c,I}\omega_I(e_1,e_{2n},z)=d^{c,J}\omega_J(e_1,e_{2n},z)$ if and only if
$g(v,z)=0$,
 First of all,
$$
\begin{aligned}
d^{c,I}\omega_I(e_1,e_{2n},z)& =-g([Ie_{1},Ie_{2n}],z)-g([Ie_{2n},Iz],e_{1})-g([Iz,Ie_{1}],e_{2n}),\\
&=g([e_{2n},e_1],z)+g([e_1,Iz],e_1),\\
&=-g(ae_{2n}+v,z)+g(AIz,e_1),\\
&=-g(v,z),
\end{aligned}
$$
and
$$
\begin{aligned}
d^{c,J}\omega_J(e_1,e_{2n},z)& =-g([Je_{1},Je_{2n}],z)-g([Je_{2n},Jz],e_{1})-g([Jz,Je_{1}],e_{2n}),\\
&=-g([e_{2n-1},e_{2n-2}],z)-g([e_{2n-2},Jz],e_1)-g([Jz,e_{2n-1}],e_{2n}),\\
&=0,
\end{aligned}
$$
hence, $d^{c,I}\omega_I(e_1,e_{2n},z)=d^{c,J}\omega_J(e_1,e_{2n},z)$ if and only if
$g(v,z)=0$.\\
Therefore, since $\mfu_{I,J}$ is $I$-, $J$-, $K$-invariant, by the previous considerations the hyperhermitian structure is HKT if and only if
\begin{itemize}
\item $g(AJy,z)=g(AJz,y)$,
\item $g(AIy,z)=g(AIz,y)$,
\item $g(v,z)=0$,
\end{itemize}
for every $y,z\in\mfu_{I,J}$. In particular, we obtain $v=0$ and, since $[A,J]=0$ on $\mfu_{I,J}$, $g(AJy,z)-g(AJz,y)=0$ for every $y,z$ if and only if
$$
g((AJ-A^tJ^t)y,z)=g((A+A^t)Jy,z)=0,
$$
for every $y,z$
if and only if $A+A^t=0$ on $\mfu_{I,J}$. This concludes the proof.
\end{proof}

The existence of an hyperk\"ahler metric can be characterized in terms of $\tilde A,a,v$.
\begin{thm}\cite{Andrada:2022aa}\label{almost-abelian-hyp}
$(I,J,K,g)$ is hyperk\"ahler if and only if $\tilde A\in \mathfrak{so}(\mfu_{I,J})$, $a=0$ and $v=0$.
\end{thm}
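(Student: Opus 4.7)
My plan is to build on Theorem~\ref{almost-abelian-hkt}, observing that every hyperk\"ahler metric is automatically HKT, and then to extract the single extra scalar condition that distinguishes hyperk\"ahler from HKT.

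For the forward direction: Since hyperk\"ahler implies HKT, Theorem~\ref{almost-abelian-hkt} already gives $v=0$ and $\tilde A\in\mathfrak{so}(\mfu_{I,J})$, so only $a=0$ remains to be shown. To isolate $a$, I would test $d\omega_I$ on the triple $(e_1, Je_1, Ke_1)$ via the Chevalley--Eilenberg formula
\[
d\omega_I(x,y,z)=-\omega_I([x,y],z)-\omega_I([y,z],x)-\omega_I([z,x],y).
\]
Abelianness of $\mfu$ kills $[Je_1,Ke_1]$, while $v=0$ yields $f(Je_1)=aJe_1$ and $f(Ke_1)=aKe_1$. Combined with the identity $\omega_I(Je_1,Ke_1)=g(IJe_1,Ke_1)=g(Ke_1,Ke_1)=1$ (orthonormality) and the antisymmetry of $\omega_I$, the computation collapses to $d\omega_I(e_1,Je_1,Ke_1)=-2a$. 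Hence the hyperk\"ahler condition forces $a=0$.

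For the converse direction: Assuming $\tilde A\in\mathfrak{so}(\mfu_{I,J})$, $a=0$, and $v=0$, I would verify $d\omega_L=0$ for $L\in\{I,J,K\}$. A slick reduction is to note that $\omega_L$ is of type $(1,1)$ with respect to $L$, so $L\omega_L=\omega_L$ and therefore $d\omega_L=0\iff d^{c,L}\omega_L=0$. Since by Theorem~\ref{almost-abelian-hkt} our assumptions already imply HKT, i.e.\ $d^{c,I}\omega_I=d^{c,J}\omega_J=d^{c,K}\omega_K$, it suffices to prove $d\omega_I=0$. On triples inside $\mfu$ this vanishes from abelianness, and on triples of the form $(e_1,y,z)$ a case analysis using the splitting $\mfu=\mfu_{I,J}\oplus\langle Ke_1\rangle\oplus\langle Je_1\rangle\oplus\langle Ie_1\rangle$ handles every remaining term: contributions involving $Ie_1,Je_1,Ke_1$ in the second or third slot die because $f$ acts by $a=0$ there, while the purely $\mfu_{I,J}$-contribution $\omega_I(\tilde Ay,z)-\omega_I(\tilde Az,y)$ vanishes thanks to $[\tilde A,I]=0$ and $\tilde A^t=-\tilde A$.

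The point demanding most care is precisely the verification that $\omega_I(\tilde Ay,z)-\omega_I(\tilde Az,y)=0$ on $\mfu_{I,J}$: one must carefully combine $I^t=-I$, the commutation $[\tilde A,I]=0$, and $\tilde A^t=-\tilde A$ to conclude that the two terms coincide. Once that identity is in hand, orthogonality of $\tilde A\mfu_{I,J}\subseteq\mfu_{I,J}$ to $\langle Ie_1,Je_1,Ke_1\rangle$ together with $a=0$ disposes of every remaining configuration, completing the proof.
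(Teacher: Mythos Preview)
Your proof is correct and follows the same overall strategy as the paper: both reduce the statement to ``hyperk\"ahler $\iff$ HKT and $d\omega_I=0$'', invoking Theorem~\ref{almost-abelian-hkt} for the HKT part and then identifying the K\"ahler condition on $(I,g)$ with $a=0$ (given that $v=0$ and $\tilde A$ is skew). The only difference is that the paper obtains the K\"ahler characterization by citing \cite[Lemma~3.6]{MR4278225}, which says $(I,g)$ is K\"ahler on an almost-abelian Lie algebra iff $w=0$ and $f_0\in\mathfrak{so}(\mfu_I)$, whereas you verify $d\omega_I=0$ by hand via the Chevalley--Eilenberg formula and the case analysis on the splitting of $\mfu$. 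Your route is slightly more self-contained; the paper's is shorter because the cited lemma packages the computation you carry out explicitly.
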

\begin{proof}
In view of the previous theorem $Id\omega_I=Jd\omega_J$ if and only if $\tilde A\in \mathfrak{so}(\mfu_{I,J})$ and $v=0$.\\
By \cite[Lemma 3.6]{MR4278225} $(I,g)$ is K\"ahler if and only if $v=0$ and
\begin{equation*}
\begin{pmatrix}
      \tilde A & 0& 0 \\
      \mathbf 0 & a & 0\\
      \mathbf 0 & 0 &a 
   \end{pmatrix}\,.
\end{equation*}
belongs to $\mathfrak{so}(\mfu_{I})$.\\
Therefore, if $(I,J,K,g)$ is hyperk\"ahler then in particular, since it is HKT we have $\tilde A\in \mathfrak{so}(\mfu_{I,J})$ and $v=0$, and also $(I,g)$ is K\"ahler giving $a=0$.
Viceversa, if $\tilde A\in \mathfrak{so}(\mfu_{I,J})$, $a=0$ and $v=0$ then $g$ is HKT, namely $Id\omega_I=Jd\omega_J$ and $(I,g)$ is K\"ahler that is $d\omega_I=0$, concluding the proof.
\end{proof}

As an immediate corollary we obtain
\begin{cor}
Let $\mfg$ be a unimodular $4n$-dimensional almost-abelian Lie algebra endowed with an hypercomplex structure $(I,J,K)$ and an hyperhermitian metric $g$. Then,
$(I,J,K,g)$ is HKT if and only if $(I,J,K,g)$ is hyperk\"ahler.
\end{cor}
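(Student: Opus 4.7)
The plan is to leverage the two characterizations that have just been established: Theorem~\ref{almost-abelian-hkt} gives HKT $\iff$ $\tilde A\in\mathfrak{so}(\mfu_{I,J})$ and $v=0$, while Theorem~\ref{almost-abelian-hyp} gives hyperk\"ahler $\iff$ the same two conditions hold \emph{together with} $a=0$. Hence the corollary will follow immediately once we prove that the unimodularity hypothesis forces $a=0$ as soon as the HKT conditions are satisfied.

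First I would translate the unimodularity of $\mfg$ into a trace condition on $f=\ad_X|_{\mfu}$. Since $\mfu$ is an abelian ideal of codimension one and $\mfg=\mfu\rtimes_f\R$, the Lie algebra $\mfg$ is unimodular precisely when $\tr(f)=0$. Reading off the block form from Theorem~\ref{thm:almost-abelian},
\[
\tr(f)=\tr(\tilde f)+3a.
\]
Second, I would observe that if $g$ is HKT then by Theorem~\ref{almost-abelian-hkt} the block $\tilde f=\tilde A$ lies in $\mathfrak{so}(\mfu_{I,J})$, hence is skew-symmetric and in particular $\tr(\tilde A)=0$. Combining this with the previous displayed equation and the unimodularity assumption yields $3a=0$, i.e.\ $a=0$.

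Together with $\tilde A\in\mathfrak{so}(\mfu_{I,J})$ and $v=0$ this is exactly the triple of conditions provided by Theorem~\ref{almost-abelian-hyp}, so $(I,J,K,g)$ is hyperk\"ahler. The converse direction is immediate since $d\Omega=0$ trivially implies $\partial\Omega=0$, so any hyperk\"ahler structure is HKT without any unimodularity hypothesis. I do not expect any real obstacle here: the entire content of the statement is packaged in the already-proven parametrization of $f$, and the only nontrivial step is the one-line trace computation combined with the vanishing of $\tr(\tilde A)$ for skew-symmetric $\tilde A$.
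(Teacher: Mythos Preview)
Your proposal is correct and matches the paper's reasoning: the corollary is stated there as ``immediate'' from Theorems~\ref{almost-abelian-hkt} and~\ref{almost-abelian-hyp}, and the missing link is precisely the trace identity $\tr(f)=\tr(\tilde f)+3a$ together with $\tr(\tilde A)=0$ for $\tilde A\in\mathfrak{so}(\mfu_{I,J})$, exactly as you wrote. The paper confirms this unimodularity identity explicitly a few lines later (in the proof of the $SL(n,\mathbb{H})$ corollary), so there is nothing to add.
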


{We characterize now the existence of $SL(n,\mathbb{H})$-structures on almost-abelian hypercomplex solvmanifolds.
In order to do so we adapt~\cite[Proposition 2.4]{MR4466741} to the hypercomplex case. }
\begin{prop}\label{prop:almost-abelian-slnh}
Let $\mfg$ be an almost-abelian Lie algebra of real dimension $4n$ equipped with a hyperhermitian structure $(I,J,K,g,\Omega)$.
Then, $(\mfg,I,J,K,g,\Omega)$ admits a closed $(2n,0)$-form if and only if $$a+\frac{1}{4}tr\left({{\tilde{f}}}\right)=0,$$
where $a,\tilde{f}$ are given by Theorem~\ref{thm:almost-abelian}.
\end{prop}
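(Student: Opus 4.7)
The plan is to reduce the existence of an invariant closed $(2n,0)$-form to a single scalar equation, and then identify that equation with $a+\tfrac{1}{4}\tr(\tilde f)$. Pick an adapted real basis $\{e_1,\dots,e_{4n}\}$ with $e_1=X$, $e_{4n}=Ie_1$, $e_{4n-1}=Je_1$, $e_{4n-2}=-Ke_1$ and $\mfu_{I,J}=\mathrm{span}(e_2,\dots,e_{4n-3})$, chosen so that the middle basis is $I$-adapted. Set $\varphi^1=e^1+\sqrt{-1}\,e^{4n}$, $\varphi^{2n}=e^{4n-2}+\sqrt{-1}\,e^{4n-1}$ and let $\varphi^2,\dots,\varphi^{2n-1}$ be a complex $(1,0)$-basis of $\mfu_{I,J}^*$. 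Since the space of invariant $(2n,0)$-forms is one-dimensional, the question becomes whether $\Phi:=\varphi^1\wedge\cdots\wedge\varphi^{2n}$ is $d$-closed; integrability of $I$ forces $d\Phi\in A^{2n,1}$, hence $d\Phi=-\Phi\wedge\theta$ for a unique $(0,1)$-form $\theta=\sum_j\left(\sum_k b^k_{kj}\right)\bar\varphi^j$, where $b^k_{ij}$ denotes the coefficient of $\varphi^i\wedge\bar\varphi^j$ in $d\varphi^k$.

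The next step is to compute each $d\varphi^k$ from the matrix of $f=\ad_X|_{\mfu}$ in Theorem~\ref{thm:almost-abelian}. Abelianness of $\mfu$ forces $de^k$ to be a multiple of $e^1\wedge(\cdot)$ for every $k\geq 2$, while $de^1=0$. A direct expansion gives $d\varphi^1=\tfrac{a}{2}\,\varphi^1\wedge\bar\varphi^1$ and $d\varphi^{2n}=-\tfrac{a}{2}\,\varphi^1\wedge\varphi^{2n}+\tfrac{a}{2}\,\varphi^{2n}\wedge\bar\varphi^1$. For $k\in\{2,\dots,2n-1\}$, the identity $\varphi^k\circ I=\sqrt{-1}\,\varphi^k$ collapses $\varphi^k(Jv)\,e^{4n-2}+\varphi^k(Kv)\,e^{4n-1}$ into the single term $\varphi^k(Jv)\,\varphi^{2n}$, and one obtains $d\varphi^k=-e^1\wedge\bigl(\tilde f^*\varphi^k+\varphi^k(Jv)\,\varphi^{2n}+\varphi^k(v)\,e^{4n}\bigr)$. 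Expanding in the $(1,0)/(0,1)$-basis shows that the $v$-dependent terms contribute only to $\varphi^1\wedge\bar\varphi^1$ and $\varphi^{2n}\wedge\bar\varphi^1$, whereas the $\tilde f$-term contributes $\tfrac{1}{2}\,\tilde f^*\varphi^k\wedge\bar\varphi^1$; in particular $b^k_{k,1}$ equals $\tfrac{1}{2}$ times the $(k,k)$-entry of the complex matrix of $\tilde f$.

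Summing over $k$, all sums $\sum_k b^k_{k,j}$ with $j\neq 1$ vanish identically, while $\sum_k b^k_{k,1}=a+\tfrac{1}{2}\tr_{\C}(\tilde f)$, where $\tr_{\C}(\tilde f)$ denotes the trace of $\tilde f$ viewed as a $\C$-linear endomorphism of $\mfu_{I,J}$ with respect to $I$. Hence $d\Phi=0$ if and only if $a+\tfrac{1}{2}\tr_{\C}(\tilde f)=0$. To finish, note that $[\tilde f,I]=[\tilde f,J]=0$ makes $\tilde f$ quaternionic-linear on $\mfu_{I,J}$, which forces $\tr_{\C}(\tilde f)$ to be real and equal to $\tfrac{1}{2}\tr(\tilde f)$; substituting yields the claimed condition $a+\tfrac{1}{4}\tr(\tilde f)=0$.

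The main obstacle is the bookkeeping in the middle step: one has to verify carefully that the $v$-dependent terms in $d\varphi^k$ only contaminate the coefficients $b^k_{1,1}$ and $b^k_{2n,1}$ and do not reach the diagonal $b^k_{k,1}$, so that the sum $\sum_k b^k_{k,1}$ depends only on $a$ and on the trace of $\tilde f$. The simplification that makes this separation work is the identity $\varphi^k(Kv)=\sqrt{-1}\,\varphi^k(Jv)$, an immediate consequence of $K=IJ$ together with $\varphi^k$ being a $(1,0)$-form.
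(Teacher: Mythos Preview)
Your proof is correct and follows essentially the same approach as the paper: both reduce the closedness of the top invariant $(2n,0)$-form to the vanishing of a single scalar $a+\tfrac{1}{2}\tr_{\C}(\tilde f)$, and then use the quaternionic-linearity of $\tilde f$ to identify $\tr_{\C}(\tilde f)$ with $\tfrac{1}{2}\tr(\tilde f)$. The only cosmetic difference is organizational: the paper packages $\bar\partial$ on $(1,0)$-forms as an endomorphism (values in $(\cdot)\wedge\bar\varphi_1$) written in the $J$-paired basis $\{\varphi_i,\,J\bar\varphi_i\}$, whereas you expand each $d\varphi^k$ and isolate the diagonal $(1,1)$-coefficients $b^k_{k,1}$; your bookkeeping argument that the $v$-terms land only in $b^k_{1,1}$ and $b^k_{2n,1}$ (via the identity $\varphi^k(Kv)=\sqrt{-1}\,\varphi^k(Jv)$) is the counterpart of the paper's explicit matrix form, and the reality of the complex trace is argued abstractly by you versus by a direct coordinate computation in the paper.
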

\begin{proof}
Let $\{e_1,\cdots,e_{4n}\}$ be a $g$-orthonormal basis of $\mfg$ such that $e_1$ is in $\mfg/\mfu$, $Ie_1=e_{4n}$, $Je_1=e_{4n-1}$, $Ie_{2p}=e_{2p+1}$ for $1\leq p\leq 2n-1,$ and $\{\varphi_1=e^1+\sqrt{-1}e^{4n},J\bar{\varphi_1}=e^{4n-1}-\sqrt{-1}e^{4n-2},\varphi_2=e^2+\sqrt{-1}e^3,J\bar{\varphi_2}=e^4+\sqrt{-1}e^5,\varphi_3=e^6+\sqrt{-1}e^7,J\bar{\varphi_3}=e^8+\sqrt{-1}e^9,\cdots,\varphi_{n}=e^{4n-6}+\sqrt{-1}e^{4n-5},J\bar{\varphi_n}=e^{4n-4}+\sqrt{-1}e^{4n-3}\}$ is a basis of $(1,0)$-forms.
The image of $(1,0)$-forms by $\bar{\partial}$ lies in the space of $(1,0)$-forms wedge product with $\bar{\varphi_1}$. Thus, the operator $\bar{\partial}$
acting on $(1,0)$-forms can been seen as an endomorphism of $(1,0)$-forms. With respect to the basis $\{\varphi_1,J\bar{\varphi_1},\cdots,\varphi_{n},J\bar{\varphi_n}\}$, the endomorphism is:
$$
\frac{1}{2}\left(\begin{array}{ccc}a & 0 & \tilde{v} \\0 & a & \tilde{w} \\\bold{0} & \bold{0} & {\bold{A}}^T\end{array}\right),
$$
where $a=g([e_1,Ie_1],Ie_1),\bold{A}$ is the complex matrix corresponding to $ad_{e_1}|_{\mfu_{I,J,K}}$ with respect to
the basis $\{e_2-\sqrt{-1}e_3,\cdots,e_{4n-4}-\sqrt{-1}e_{4n-3}\},\tilde{v}^T=(v_3-\sqrt{-1}v_2,\cdots,v_{4n-3}-\sqrt{-1}v_{4n-4})$
and $\tilde{w}^T=(-v_5-\sqrt{-1}v_4,v_3+\sqrt{-1}v_2,\cdots,-v_{4n-3}-\sqrt{-1}v_{4n-4},v_{4n-5}+\sqrt{-1}v_{4n-6})$ where $v_i=g([e_1,Ie_1],e_i)$ for $2\leq i\leq 4n-3$. 

Now, we compute 
\begin{eqnarray*}
\bar{\partial}\left(\varphi_1\wedge J\bar{\varphi_1}\wedge\cdots\wedge \varphi_{n}\wedge J\bar{\varphi_n}\right)&=&(a+\frac{1}{2}tr\left({\bold{A}}^T\right))\,\varphi_1\wedge\bar{\varphi_1}\wedge J\bar{\varphi_1}\wedge\cdots\wedge \varphi_{n}\wedge J\bar{\varphi_n},\\
&=&(a+\frac{1}{2}tr\left({\bold{A}}\right))\,\varphi_1\wedge\bar{\varphi_1}\wedge J\bar{\varphi_1}\wedge\cdots\wedge \varphi_{n}\wedge J\bar{\varphi_n}.
\end{eqnarray*}
We note here that $tr\left({\bold{A}}\right)$ is real. Indeed, we remark that the basis $\{e_2-\sqrt{-1}e_3,\cdots,e_{4n-4}-\sqrt{-1}e_{4n-3}\}$ can be expressed as $\{\varphi_2^\ast=e_2-\sqrt{-1}e_{3},\left(J\bar{\varphi_2}\right)^\ast=e_{4}-\sqrt{-1}e_{5},\cdots,\varphi_{n}^\ast=e_{4n-6}-\sqrt{-1}e_{4n-5},\left(J\bar{\varphi_n}\right)^\ast=e_{4n-4}-\sqrt{-1}e_{4n-3}\}$. Hence, 
\begin{eqnarray*}
tr\left({\bold{A}}\right)&=&\frac{1}{2}\sum_{i=2}^{n}g([e_1,\varphi_i^\ast],\bar{\varphi_i^\ast})+g([e_1,\left(J\bar{\varphi_i}\right)^\ast],\bar{\left(J\bar{\varphi_i}\right)^\ast})\\
&=&\frac{1}{2}\sum_{k=2,6,\cdots,4n-6}g([e_1,e_k],e_k)+g([e_1,Ie_k],Ie_k)+g([e_1,Je_k],Je_k)+g([e_1,JIe_k],JIe_k)\\
&-&\frac{1}{2}\sqrt{-1}\sum_{k=2,6,\cdots,4n-6}g([e_1,Ie_k],e_k)-g([e_1,e_k],Ie_k)-g([e_1,JIe_k],Je_k)+g([e_1,Je_k],JIe_k)\\
&=&2\sum_{k=2,6,\cdots,4n-6}g([e_1,e_k],e_k)\\
&=&\frac{1}{2}\sum_{i=2}^{4n-3}g([e_1,e_i],e_i)=\frac{1}{2}tr\left({{\tilde{f}}}\right),
\end{eqnarray*}
 where $\tilde{f}$ is given by Theorem~\ref{thm:almost-abelian}. Here, we use the fact that $[e_1,L e_k]=L[e_1,e_k]$, where $L=I,J,K.$

\end{proof}

We remark that the result is a particular case of~\cite[Proposition 2.4]{MR4466741} because the trace of the complexification of $\tilde{f}$ given by Theorem~\ref{thm:almost-abelian} is real.

As a consequence, with the notations used above, we obtain the following

\begin{cor}
Let $M:=\Gamma\backslash G$ be a $4n$-dimensional solvmanifold with $G$ almost-abelian Lie group. Let $\mathfrak{g}=\text{Lie}(G)$ and let $(I,J,K,g,\Omega)$ be an invariant hyperhermitian structure on $M$. Then, $(M,I,J,K,g,\Omega)$ is $SL(n,\mathbb{H})$ if and only if $a=0$ and $tr\left({{\tilde{f}}}\right)=0$.

\end{cor}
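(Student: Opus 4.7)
My approach is to combine Proposition~\ref{prop:almost-abelian-slnh} with the unimodularity of $G$. The space $\Lambda^{2n,0}_I\mfg^*$ of invariant $(2n,0)$-forms is one-dimensional, spanned by the canonical top form $\Phi_0 := \varphi_1\wedge J\bar\varphi_1\wedge\cdots\wedge \varphi_n\wedge J\bar\varphi_n$ built from the $I$-adapted basis of Proposition~\ref{prop:almost-abelian-slnh}. Since the Obata connection is canonical, it is invariant on $G$, and any Obata-parallel nowhere-vanishing complex volume form is therefore invariant, hence a scalar multiple of $\Phi_0$. Thus, in this invariant setting, the $SL(n,\mathbb{H})$ condition is equivalent to the $\bar\partial$-closedness of $\Phi_0$ (note that $\partial\Phi_0=0$ holds automatically for top bidegree), which by Proposition~\ref{prop:almost-abelian-slnh} is the scalar equation
\[
a + \frac{1}{4}\tr(\tilde f) = 0. \qquad (\ast)
\]

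Next, I would import the unimodularity of $G$, a necessary condition for $M=\Gamma\backslash G$ to be a compact solvmanifold. Because $\mfu$ is an abelian ideal, $\ad_Y\equiv 0$ on $\mfu$ for every $Y\in\mfu$, so $\mfg$ is unimodular iff $\tr(\ad_X)=0$. The block form of $f = \ad_X|_\mfu$ in Theorem~\ref{thm:almost-abelian} has lower-right $3\times 3$ block $a\cdot\id$, giving
\[
\tr(\ad_X) \;=\; \tr(f) \;=\; \tr(\tilde f)+3a,
\]
so unimodularity amounts to
\[
\tr(\tilde f)+3a=0. \qquad (\ast\ast)
\]
Combining is immediate: substituting $\tr(\tilde f)=-3a$ from $(\ast\ast)$ into $(\ast)$ yields $a/4=0$, hence $a=0$ and therefore $\tr(\tilde f)=0$. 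Conversely, $a=0=\tr(\tilde f)$ forces both $(\ast)$ and $(\ast\ast)$, so $G$ is unimodular and $\Phi_0$ is a closed, nowhere-vanishing, invariant $(2n,0)$-form furnishing the $SL(n,\mathbb{H})$ trivialization of the canonical bundle of $M$.

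The one subtle step is the first: bridging the holonomy-theoretic $SL(n,\mathbb{H})$ condition with the $\bar\partial$-closedness of $\Phi_0$. The implication ``$SL(n,\mathbb{H})\Rightarrow\Phi_0$ closed'' comes from Verbitsky's theorem (quoted in the preliminaries) that a compact $SL(n,\mathbb{H})$-manifold has holomorphically trivial canonical bundle, combined with the invariance of the Obata connection which forces the trivializing section to be a constant multiple of $\Phi_0$. The reverse implication uses that $\Phi_0$ is the unique (up to scalar) invariant $(2n,0)$-form and that, on the associated invariant complex line, $\bar\partial$-closedness is equivalent to $\Phi_0$ being Obata-parallel, which is in turn equivalent to the Obata holonomy lying in $SL(n,\mathbb{H})$. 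Once this bridge is set up, the remainder of the argument is the two-line linear algebra above.
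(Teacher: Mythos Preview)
Your proof is correct and follows essentially the same route as the paper: combine Proposition~\ref{prop:almost-abelian-slnh} with the unimodularity constraint $\tr(\tilde f)+3a=0$ to force $a=0=\tr(\tilde f)$. The paper simply asserts the equivalence ``$SL(n,\mathbb{H})$ iff there exists an invariant closed $(2n,0)$-form'' on solvmanifolds as known, whereas you spell out a justification via Obata-parallelism; that bridge is sound (torsion-freeness of the Obata connection gives $d\Phi_0=0\Leftrightarrow\nabla^{\mathrm{Ob}}\Phi_0=0$ on top $(2n,0)$-forms, and left-invariance of the connection forces a parallel section to be invariant).
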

\begin{proof}
Since  $M:=\Gamma\backslash G$ we have that $G$ is unimodular and this is equivalent to
$$
tr\left({{\tilde{f}}}\right)=-3a.
$$
Moreover, on solvmanifolds an invariant hyperhermitian structure is $SL(n,\mathbb{H})$ if and only if there exists an invariant closed $(2n,0)$-form.
By Proposition \ref{prop:almost-abelian-slnh} this turns out to be equivalent to
 $$
 a=-\frac{1}{4}tr\left({{\tilde{f}}}\right)=\frac{3}{4}a.
 $$
 Hence we get the thesis.
\end{proof}

\subsection{Explicit construction in dimension $8$}

Let $\mfg$ be an $8$-dimensional almost-abelian Lie algebra with codimension one abelian ideal $\mathfrak{u}$ and let $I,J$ be two anti-commuting almost-complex structures on $\mathfrak{g}$. Let $\left\lbrace e_i\right\rbrace$ be a basis such that $f:=ad_{e_1}$, $\mfu=\left\langle e_2,e_3,e_4,e_5, e_6,e_7,e_8\right\rangle$
$$Ie^1=e^8,\quad Ie^2=e^3,\quad Ie^4=e^5,\quad Ie^6=e^7, $$
$$Je^1=e^7,\quad Je^2=e^4,\quad Je^3=-e^5,\quad Je^6=-e^8, $$
$$Ke^1=-e^6,\quad Je^2=e^5,\quad Ke^3=e^4,\quad Ke^7=-e^8. $$
With the above notations $\mfu_{I,J}=\left\langle e_2,e_3,e_4,e_5\right\rangle$, 
\begin{equation*}
I=\begin{pmatrix}
      0 & -1& 0 & 0 \\
     1 & 0 & 0&0\\
       0 & 0 &0 &-1\\
      0 & 0 &1 &0
   \end{pmatrix}\,,
\end{equation*}
and
\begin{equation*}
J=\begin{pmatrix}
      0 & 0& -1 & 0 \\
     0& 0 & 0&1\\
       1 & 0 &0 &0\\
      0 & -1 &0&0
   \end{pmatrix}\,.
\end{equation*}
Therefore, a real matrix $A$ commutes with $I$ and $J$ if and only if it is of the form
\begin{equation*}
A=\begin{pmatrix}
      a_{11} & -a_{21}& a_{13} & -a_{23} \\
     a_{21} & a_{11}& a_{23} &a_{13}\\
       -a_{13} & -a_{23} &a_{11} & a_{21}\\
      a_{23} & -a_{13} & -a_{21} &a_{11}
   \end{pmatrix}\,.
\end{equation*}
Assume that $I$ and $J$ are integrable, hence the structure equations become
$$
\begin{aligned}
[e_1,e_2]&=a_{11}e_2+a_{21}e_3-a_{13}e_4+a_{23}e_5\,,\\
[e_1,e_3]&=-a_{21}e_2+a_{11}e_3-a_{23}e_4-a_{13}e_5\,,\\
[e_1,e_4]&=a_{13}e_2+a_{23}e_3+a_{11}e_4-a_{21}e_5\,,\\
[e_1,e_5]&=-a_{23}e_2+a_{13}e_3+a_{21}e_4+a_{11}e_5\,,\\
[e_1,e_6]&=ae_6-v_4e_2+v_5e_3+v_2e_4-v_3e_5\,,\\
[e_1,e_7]&=ae_7-v_5e_2-v_4e_3+v_3e_4+v_2e_5\,,\\
[e_1,e_8]&=ae_8+v_2e_2+v_3e_3+v_4e_4+v_5e_5\,,
\end{aligned}
$$
for some $v_2,v_3,v_4,v_5\in\mathbb{R}$.\\
Notice that $\mfg$ is unimodular if and only if $3a+4a_{11}=0$.

A basis of $(1,0)$-forms is given by:
$$\varphi^1=e^1+\sqrt{-1}e^8,\quad \varphi^2=e^2+\sqrt{-1}e^3,\quad \varphi^3=e^4+\sqrt{-1}e^5,\quad \varphi^4=e^7-\sqrt{-1}e^6.$$
The structure equations in terms of the differential $\partial$ is:
$$\partial\varphi_1=0,\quad \partial\varphi_4=-\frac{a}{2}\varphi_1\wedge\varphi_4,$$
$$\partial\varphi_2=\frac{-a_{11}-\sqrt{-1}a_{21}}{2}\varphi_1\wedge\varphi_2+\frac{-a_{13}-\sqrt{-1}a_{23}}{2}\varphi_1\wedge\varphi_3+\frac{v_5+\sqrt{-1}v_4}{2}\varphi_1\wedge\varphi_4,$$
$$\partial\varphi_3=\frac{a_{13}-\sqrt{-1}a_{23}}{2}\varphi_1\wedge\varphi_2+\frac{-a_{11}+\sqrt{-1}a_{21}}{2}\varphi_1\wedge\varphi_3+\frac{-v_3-\sqrt{-1}v_2}{2}\varphi_1\wedge\varphi_4.$$
The $(2,0)$-form corresponding to the hyperhermitian metric is $\Omega=\varphi_1\wedge J\bar{\varphi_1}+\varphi_2\wedge J\bar{\varphi_2}=\varphi_1\wedge\varphi_4+\varphi_2\wedge\varphi_3.$
The HKT condition $\partial\Omega=0$ is equivalent to $a_{11}=v_2=v_3=v_4=v_5=0.$

On the other hand, the structure equations in terms of the differential $\bar{\partial}$ is:
$$\bar{\partial}\varphi_1=\frac{a}{2}\varphi_1\wedge\bar{\varphi}_1,\quad\bar{\partial}\varphi_4=-\frac{a}{2}\bar{\varphi}_1\wedge\varphi_4.$$
$$\bar{\partial}\varphi_2=\frac{-a_{11}-\sqrt{-1}a_{21}}{2}\bar{\varphi}_1\wedge\varphi_2+\frac{-a_{13}-\sqrt{-1}a_{23}}{2}\bar{\varphi}_1\wedge\varphi_3+\frac{v_5+\sqrt{-1}v_4}{2}\bar{\varphi}_1\wedge\varphi_4+\frac{v_3-\sqrt{-1}v_2}{2}\varphi_1\wedge\bar{\varphi}_1,$$
$$\bar{\partial}\varphi_3=\frac{a_{13}-\sqrt{-1}a_{23}}{2}\bar{\varphi}_1\wedge\varphi_2+\frac{-a_{11}+\sqrt{-1}a_{21}}{2}\bar{\varphi}_1\wedge\varphi_3+\frac{-v_3-\sqrt{-1}v_2}{2}\bar{\varphi_1}\wedge\varphi_4+\frac{v_5-\sqrt{-1}v_4}{2}\varphi_1\wedge\bar{\varphi}_1.$$
The $SL(2,\mathbb{H})$ condition $\bar{\partial}\left(\varphi_1\wedge\varphi_4\wedge\varphi_2\wedge\varphi_3\right)=0$ is equivalent to $a=-a_{11}.$ 

We consider the $(2,0)$-forms $\Phi_1,\Phi_2\in \Omega^{\bar{J},-}(M)$:
$$\Phi_1=\varphi_1\wedge\varphi_2-\varphi_4\wedge\varphi_3,\quad \Phi_2=\varphi_1\wedge\varphi_3-\varphi_2\wedge\varphi_4.$$ 
Then $\partial\Phi_1=0$ is equivalent to $a_{11}=a,a_{21}=a_{13}=a_{23}=0$ and $\partial\Phi_2=0$ is equivalent to $a_{11}=-a,a_{21}=a_{13}=a_{23}=0$. Remark that $\Phi_1,\Phi_2$ can not be $\partial$-exact.

From the above discussion, we conclude
\begin{cor}\label{anti-inv-almost-abelian}
Let $\mfg$ be an $8$-dimensional non-abelian almost-abelian unimodular Lie algebra equipped with a left-invariant $SL(2,\mathbb{H}) $-structure. Then the dimension of $\del$-closed non $\del$-exact left-invariant imaginary $(2,0)$-forms is non-zero if and only if $\tilde{f}= 0$ and $a=0$ where $\tilde{f}$ and $a$ are given by Theorem~\ref{thm:almost-abelian}. In particular, $\mfg$ is nilpotent and do not admit any HKT metric.
\end{cor}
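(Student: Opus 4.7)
The plan is to combine the explicit computations carried out in the preceding subsection with the $SL(2,\mathbb{H})$ and unimodularity hypotheses, and to conclude using the characterization of HKT metrics on hypercomplex nilmanifolds from \cite{MR2496748}. I would begin by translating the two hypotheses in terms of the data of Theorem~\ref{thm:almost-abelian}: the $SL(2,\mathbb{H})$ condition derived above is $a=-a_{11}$, while unimodularity reads $\mathrm{tr}(f)=\mathrm{tr}(\tilde f)+3a=4a_{11}+3a=0$. Solving this linear system yields $a=a_{11}=0$ automatically, so half of the desired conclusion ($a=0$) is free, and the task reduces to showing that the remaining entries $a_{21},a_{13},a_{23}$ of $\tilde f$ vanish if and only if there exists a non-zero $\partial$-closed non-$\partial$-exact invariant imaginary $(2,0)$-form.

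The space of invariant imaginary $(2,0)$-forms is $2$-dimensional, spanned by $\Phi_1=\varphi_1\wedge\varphi_2-\varphi_4\wedge\varphi_3$ and $\Phi_2=\varphi_1\wedge\varphi_3-\varphi_2\wedge\varphi_4$. No non-trivial combination $\alpha\Phi_1+\beta\Phi_2$ can be $\partial$-exact, since the image of $\partial$ on invariant $(1,0)$-forms lies in $\mathrm{span}(\varphi_1\wedge\varphi_j)_{j=2,3,4}$, whereas any such combination has a non-zero component in $\mathrm{span}(\varphi_2\wedge\varphi_4,\varphi_3\wedge\varphi_4)$. Using the structure equations, one expresses $\partial\Phi_1$ and $\partial\Phi_2$ as $\mathbb{C}$-linear combinations of $\varphi_1\wedge\varphi_2\wedge\varphi_4$ and $\varphi_1\wedge\varphi_3\wedge\varphi_4$; the existence of a non-trivial closed combination then amounts to the vanishing of a $2\times 2$ determinant. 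After substituting $a=a_{11}=0$, a short computation shows the determinant equals $a_{13}^2+a_{23}^2+a_{21}^2$, a sum of squares of real numbers. It therefore vanishes if and only if $a_{13}=a_{23}=a_{21}=0$, which combined with $a_{11}=0$ means precisely $\tilde f=0$.

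For the last assertion, assume $\tilde f=0$ and $a=0$. Then $f$ is strictly upper triangular in the block decomposition of Theorem~\ref{thm:almost-abelian} and satisfies $f^2=0$, so $\mfg=\mathbb{R}e_1\ltimes_f\mfu$ is $2$-step nilpotent. The non-existence of HKT metrics then follows from \cite{MR2496748}: a hypercomplex nilmanifold admits an HKT metric if and only if the underlying hypercomplex structure is abelian. The non-abelian hypothesis on $\mfg$ together with $\tilde f=0$ and $a=0$ forces $v\neq 0$, and one checks directly (for instance, $[Ie_1,Ie_6]=[e_8,e_7]=0$ while $[e_1,e_6]=-v_4e_2+v_5e_3+v_2e_4-v_3e_5\neq 0$) that the hypercomplex structure is not abelian, ruling out any HKT metric.

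The main obstacle in this plan is the determinant computation: the specific form of the structure equations must produce a determinant that, after imposing $SL(2,\mathbb{H})$ and unimodularity, becomes a genuine sum of three real squares. Without this sharp algebraic identity---which crucially relies on the initial reduction to $a=a_{11}=0$---one could not conclude vanishing of each of $a_{13},a_{23},a_{21}$ from vanishing of the determinant, and the ``only if'' direction would fail.
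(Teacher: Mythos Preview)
Your approach mirrors the paper's exactly: both reduce via the $SL(2,\mathbb H)$ and unimodularity constraints to $a=a_{11}=0$, and then analyze when the imaginary forms $\Phi_1,\Phi_2$ are $\partial$-closed. Your determinant computation for general linear combinations $\alpha\Phi_1+\beta\Phi_2$, yielding $a_{13}^2+a_{23}^2+a_{21}^2$, is actually a refinement of the paper, which only records when $\Phi_1$ and $\Phi_2$ are individually closed; likewise, your explicit verification that $f^2=0$ (hence $\mfg$ is nilpotent) and that the hypercomplex structure fails to be abelian are more detailed than the paper's bare ``from the above discussion, we conclude.''

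There is, however, one incorrect assertion in your argument: the space of left-invariant imaginary $(2,0)$-forms is \emph{not} $2$-dimensional. As a real vector space it has dimension~$6$: beyond $\Phi_1,\Phi_2$ one also has $\sqrt{-1}\,\varphi_1\wedge\varphi_4$, $\sqrt{-1}\,\varphi_2\wedge\varphi_3$, $\sqrt{-1}(\varphi_1\wedge\varphi_2-\varphi_3\wedge\varphi_4)$ and $\sqrt{-1}(\varphi_1\wedge\varphi_3+\varphi_2\wedge\varphi_4)$, since multiplying any real $(2,0)$-form by $\sqrt{-1}$ produces an imaginary one. In particular, once $a=0$ the form $\sqrt{-1}\,\varphi_1\wedge\varphi_4$ is $\partial$-closed, imaginary, and (when $\tilde f\neq 0$) not $\partial$-exact, so the ``only if'' direction cannot be obtained by restricting attention to $\mathrm{span}(\Phi_1,\Phi_2)$. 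The paper's discussion preceding the corollary is equally silent on this point --- it too considers only $\Phi_1$ and $\Phi_2$ --- so the gap is inherited rather than introduced, but the sentence ``the space of invariant imaginary $(2,0)$-forms is $2$-dimensional'' is false as written and the argument does not close without addressing the remaining four directions.
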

\begin{rem} 
The nilpotent Lie algebra in Corollary~\ref{anti-inv-almost-abelian} corresponds to the Lie algebra
$\mfg_3$ in the notation of~~\cite{Andrada:2022aa}.
\end{rem}



 


\bibliographystyle{abbrv}

\bibliography{hypercomplex}

\end{document}